\newtheorem{theorem}{Theorem}[section]
\newtheorem{proposition}[theorem]{Proposition}
\newtheorem{assumption}[theorem]{Assumption}
\newenvironment{proof}{\smallskip\par{\sc Proof.}\enspace}%
 {{\unskip\nobreak\hfil\penalty50\hskip2em
          \hbox{}\nobreak\hfil{\rule[-1pt]{5pt}{10pt}}
          \parfillskip=0pt\finalhyphendemerits=0
          \par\medskip}} 
\def\section{\@startsection {section}{1}{\z@}{3.25ex plus 1ex minus
 .2ex}{1.5ex plus .2ex}{\large\bf}}
\def\subsection{\@startsection{subsection}{2}{\z@}{3.25ex plus 1ex minus
 .2ex}{1.5ex plus .2ex}{\normalsize\bf}}
\title{ Prohorov-type local limit theorems on\\ abstract Wiener spaces }
\author{Alberto Lanconelli\footnote{Dipartimento di Matematica, Universit\'a degli Studi di Bari Aldo Moro, Via E. Orabona 4, 70125 Bari - Italia. E-mail: \emph{alberto.lanconelli@uniba.it}}}
\date{\empty}
\begin{document}

\maketitle

\numberwithin{equation}{section}

\bigskip

\begin{abstract}
We prove that the density of $\frac{X_1+\cdot\cdot\cdot+X_n-nE[X_1]}{\sqrt{n}}$, where $\{X_n\}_{n\geq 1}$ is a sequence of independent and identically distributed random variables taking values on a abstract Wiener space, converges in $\mathcal{L}^1$ to the density of a certain Gaussian measure which is absolutely continuous with respect to the reference Wiener measure. The crucial feature in our investigation is that we do not require the covariance structure of $\{X_n\}_{n\geq 1}$ to coincide with the one of the Wiener measure. This produces a non trivial (different from the constant function one) limiting object which reflects the different covariance structures involved. The present paper generalizes the results proved in \cite{LS 2016} and deepens the connection between local limit theorems on (infinite dimensional) Gaussian spaces and some key tools from the Analysis on the Wiener space, like the Wiener-It\^o chaos decomposition, Ornstein-Uhlenbeck semigroup and Wick product. We also verify and discuss our main assumptions on some examples arising from the applications: dimension independent Berry-Esseen-type bounds and weak solutions of stochastic differential equations.
\end{abstract}

Key words and phrases: local limit theorems, abstract Wiener spaces, Wick product, Ornstein-Uhlenbeck semigroup \\

AMS 2000 classification: 60F25, 60G15, 60H07

\allowdisplaybreaks

\section{Introduction and statement of the main result}

Local limit theorems are central limit theorems for densities. Given a sequence $\{X_n\}_{n\geq 1}$ of independent and identically distributed random variables, one aims to prove that the density of the standardized sum $\frac{X_1+\cdot\cdot\cdot+X_n-nE[X_1]}{\sqrt{nVar(X_1)}}$ converges in some sense to the standard normal density function. This problem has attracted the attention of several authors: we recall the classical papers by Prohorov \cite{Prohorov}, which is concerned with convergence in $\mathcal{L}^1$, Gnedenko \cite{Gnedenko}, who studies uniform convergence and Ranga Rao and Varadarajan \cite{Ranga Rao Varadarajan}, where point-wise convergence is investigated. All these three classical results rely on Fourier transform techniques applied to convolutions of densities. More recently, Barron \cite{Barron} proved that the relative entropy, also known as Kullback-Leibler divergence, of the law of $\frac{X_1+\cdot\cdot\cdot+X_n-nE[X_1]}{\sqrt{nVar(X_1)}}$ with respect to the standard Gaussian measure tends to zero, monotonically along a certain subsequence. This type of convergence is stronger than the one considered by Prohorov \cite{Prohorov} and hence improves his result. \\
When the sequence $\{X_n\}_{n\geq 1}$ takes values on an infinite dimensional space, then the validity of general local limit theorems is not guaranteed. In fact, Bloznelis \cite{Blozenis} has shown a counterexample to the validity of Prohorov's theorem on general Hilbert spaces. We also mention the paper by Davydov \cite{Davydov}, where a variant of a local limit theorem for Banach space valued random variables is proposed.\\
The aim of the present paper is to prove a version of Prohorov's theorem for an abstract Wiener space valued random sequence. More precisely, we show that the density of $\frac{X_1+\cdot\cdot\cdot+X_n-nE[X_1]}{\sqrt{n}}$, where $\{X_n\}_{n\geq 1}$ is a sequence of independent and identically distributed random variables taking values on an abstract Wiener space, converges in $\mathcal{L}^1$ to the density of a certain Gaussian measure which is absolutely continuous with respect to the reference Wiener measure. The choice of working in a space endowed with a Gaussian measure, which is the limiting object of the central/local limit theorem, brings important advantages: the roles of scaling operator and convolution product are naturally played by the Ornstein-Uhlenbeck semigroup and Wick product, respectively (see Theorem \ref{wick product convolution} below). This fact together with certain norm inequalities proved in previous papers (see Theorem \ref{Young inequality} below) become the key ingredients for the proof of our local limit theorem. Furthermore, we do not require the covariance operator of the $X_n$'s to be the identity, which is the covariance of the Wiener measure; we set three standing assumptions which allow for a richer family of Gaussian measures as limiting objects.\\

\noindent To be more specific, we recall that $(H,W,\mu)$ is an \emph{abstract Wiener space} if
$(H,\langle\cdot,\cdot\rangle_H)$ is a separable Hilbert space, which
is continuously and densely embedded in the Banach space
$(W,|\cdot|_W)$, and $\mu$ is a Gaussian probability measure
on the Borel sets of $W$ such that
\begin{eqnarray}\label{Gaussian characteristic}
\int_{W}e^{i\langle w,\varphi\rangle}d\mu(w)=e^{-\frac{1}{2}|\varphi|_H^2},\quad\mbox{ for all }\varphi\in W^*.
\end{eqnarray}
Here $W^*\subset H$ denotes the dual space of $W$, which in turn is
dense in $H$, and $\langle\cdot,\cdot\rangle$ stands for the dual
pairing between $W$ and $W^*$. We will refer to $H$ as the
\emph{Cameron-Martin} space of $W$. Observe that $\langle w,\varphi\rangle=\langle w,\varphi\rangle_H$ if $w\in H$ and $\varphi\in W^*$. Among the most important examples of abstract Wiener spaces, we recall: the Euclidean space $\mathbb{R}^d$ endowed with the standard $d$-dimensional Gaussian measure (in this case we have $W=H=\mathbb{R}^d$); the classical Wiener space $C_0([0,1])$ of continuous functions starting at zero endowed with the classical Wiener measure (in this case $W=C_0([0,1])$ and $H=H_0^1([0,1])$). In the sequel we denote by $\Vert\cdot\Vert_p$ the norm in the space $\mathcal{L}^p(W,\mu)$ for $p\geq 1$.\\
We now describe our set of assumptions. Let $\{X_n\}_{n\geq 1}$ be a sequence of independent and identically distributed random variables taking values on $W$.

\begin{assumption}\label{square integrable density}
The law of the $X_n$'s is absolutely continuous with respect to $\mu$ with a density $f$ belonging to $\mathcal{L}^2(W,\mu)$
\end{assumption}

\noindent Assumption \ref{square integrable density} has several important implications. First of all, it yields the finiteness of all the moments of the scalar random variable $\langle X_n,\varphi\rangle$ for $\varphi\in W^*$. In fact, for any $m\in\mathbb{N}$ and $\varphi\in W^*$ a simple application of the Cauchy-Schwartz inequality gives
\begin{eqnarray*}
E[|\langle X_n,\varphi\rangle|^m]&=&\int_W|\langle w,\varphi\rangle|^md\mu_{X_n}(w)\\
&=&\int_W|\langle w,\varphi\rangle|^m f(w)d\mu(w)\\
&\leq&\Big(\int_W|\langle w,\varphi\rangle|^{2m} d\mu(w)\Big)^{\frac{1}{2}}\cdot\Big(\int_W f^2(w)d\mu(w)\Big)^{\frac{1}{2}}\\
&=&\sqrt{(2m-1)!!}|\varphi|_H^{m}\Big(\int_W f^2(w)d\mu(w)\Big)^{\frac{1}{2}}
\end{eqnarray*}
entailing the finiteness of the moments. Moreover, taking $m=2$ in the previous inequality, we see that if $\{\varphi_j\}_{j\geq 1}$ is a sequence in $W^*$ converging in the norm of $H$ to $h\in H$, then
\begin{eqnarray*}
E[|\langle X_n,\varphi_j\rangle-\langle X_n,\varphi_i\rangle|^2]&=&E[|\langle X_n,\varphi_j-\varphi_i\rangle|^2]\\
&\leq&\sqrt{3}|\varphi_j-\varphi_i|_H^2\Big(\int_W f^2(w)d\mu(w)\Big)^{\frac{1}{2}}.
\end{eqnarray*}
Therefore, $\{\langle X_n,\varphi_j\rangle\}_{j\geq 1}$ turns out to be a Cauchy sequence in $\mathcal{L}^2(W,\mu)$ and we can define $\langle X_n,h\rangle$ almost surely as the limit of this sequence.\\
Another consequence of Assumption \ref{square integrable density} is that, according to the Wiener-It\^o chaos decomposition theorem, the density $f$ can be represented as $\sum_{k\geq 0}\delta^k(f_k)$, where
\begin{eqnarray*}
\delta^0(f_0)=\int_Wf(w)d\mu(w)=1
\end{eqnarray*}
and for $k\geq 1$, $f_k\in H^{\hat{\otimes}k}$, the space of symmetric elements of $H^{\otimes k}$, while $\delta^k(f_k)$ stands for the multiple Wiener-It\^o integral of $f_n$. Hence, we can write
\begin{eqnarray*}
Var(\langle X_n, h\rangle)&=&E[\langle X_n, h\rangle^2]-E[\langle X_n, h\rangle]^2\nonumber\\
&=&\int_W\langle w, h\rangle^2f(w)d\mu(w)-\Big(\int_W\langle w, h\rangle f(w)d\mu(w)\Big)^2\nonumber\\
&=&\int_W(\delta^2(h^{\otimes 2})+|h|_H^2)f(w)d\mu(w)-\Big(\int_W\langle w, h\rangle f(w)d\mu(w)\Big)^2\nonumber\\
&=&\int_W\delta^2(h^{\otimes 2})f(w)d\mu(w)+|h|_H^2-\Big(\int_W\langle w, h\rangle f(w)d\mu(w)\Big)^2\nonumber\\
&=&2\langle f_2,h^{\otimes 2}\rangle_{H^{\otimes 2}}-\langle f_1,h\rangle_H^2+|h|_H^2\nonumber\\
&=&\langle 2f_2-f_1^{\otimes 2},h^{\otimes 2}\rangle_{H^{\otimes 2}}+|h|_H^2.
\end{eqnarray*}
This gives the identity
\begin{eqnarray}\label{variance chaos expansion}
Var(\langle X_n, h\rangle)&=&\langle 2f_2-f_1^{\otimes 2},h^{\otimes 2}\rangle_{H^{\otimes 2}}+|h|_H^2
\end{eqnarray}
that will play a crucial role in the sequel. Our next assumption concerns the behaviour of $Var(\langle X_n,h\rangle)$ for $h\in H$.

\begin{assumption}\label{positive trace class}
For every $h\in H$,
\begin{eqnarray}\label{positive}
Var(\langle X_n,h\rangle)\geq |h|_H^2
\end{eqnarray}
and for some orthonormal basis $\{e_j\}_{j\geq 1}$ of $H$,
\begin{eqnarray}\label{trace class}
\sum_{j\geq 1}[Var(\langle X_n,e_j\rangle)-1]<+\infty.
\end{eqnarray}
\end{assumption}

\noindent Observe that, according to equation (\ref{variance chaos expansion}), inequality (\ref{positive}) is equivalent to the positivity of $2f_2-f_1^{\otimes 2}$; in fact for all $h\in H$ we have
\begin{eqnarray*}
\langle 2f_2-f_1^{\otimes 2},h^{\otimes 2}\rangle_{H^{\otimes 2}}&=&Var(\langle X_n, h\rangle)-|h|_H^2.
\end{eqnarray*}
Moreover, inequality (\ref{trace class}) ensures that $2f_2-f_1^{\otimes 2}$ is of trace class since
\begin{eqnarray*}
\sum_{j\geq 1}\langle 2f_2-f_1^{\otimes 2},e_j^{\otimes 2}\rangle_{H^{\otimes 2}}&=&\sum_{j\geq 1}[Var(\langle X_n, e_j\rangle)-1].
\end{eqnarray*}

\noindent Our last assumption is the following.

\begin{assumption}\label{integrable quadratic exponential}
For some orthonormal basis $\{e_j\}_{j\geq 1}$ of $H$,
\begin{eqnarray}\label{series covariance}
\sum_{i,j\geq 1}[Cov(\langle X_n,e_i\rangle,\langle X_n,e_j\rangle)-\delta_{ij}]^2<1.
\end{eqnarray}
\end{assumption}

\noindent This is a more technical condition: it serves to guarantee that the limiting Gaussian measure in our local limit theorem possesses a square integrable density with respect to the reference Wiener measure (see Proposition \ref{properties quadratic exp} below). Note that the left hand side in (\ref{series covariance}) corresponds to $|2f_2-f_1^{\otimes 2}|^2_{H^{\otimes 2}}$; in fact,
\begin{eqnarray*}
|2f_2-f_1^{\otimes 2}|^2_{H^{\otimes 2}}&=&\sum_{i,j\geq 1}\langle 2f_2-f_1^{\otimes 2},e_i\otimes e_j\rangle_{H^{\otimes 2}}^2\\
&=&\sum_{i,j\geq 1}\langle 2f_2-f_1^{\otimes 2},e_i\hat{\otimes}e_j\rangle_{H^{\otimes 2}}^2\\
&=&\sum_{i,j\geq 1}\Big[\int_W\delta^2(e_i\hat{\otimes}e_j)f(w)d\mu(w)\\
&&-\int_W\langle w,e_i\rangle f(w)d\mu(w)\cdot\int_W\langle w,e_j\rangle f(w)d\mu(w)\Big]^2\\
&=&\sum_{i,j\geq 1}\Big[\int_W\langle w,e_i\rangle\cdot\langle w,e_j\rangle f(w)d\mu(w)-\langle e_i,e_j\rangle_H\\
&&-\int_W\langle w,e_i\rangle f(w)d\mu(w)\cdot\int_W\langle w,e_j\rangle f(w)d\mu(w)\Big]^2\\
&=&\sum_{i,j\geq 1}[Cov(\langle X_n,e_i\rangle,\langle X_n,e_j\rangle)-\delta_{ij}]^2.
\end{eqnarray*}
Hence, Assumption \ref{integrable quadratic exponential} is equivalent to
\begin{eqnarray*}
\Big|f_2-\frac{f_1^{\otimes 2}}{2}\Big|^2_{H^{\otimes 2}}<\frac{1}{4}.
\end{eqnarray*}
The reason for this assumption comes from the advantages of utilizing the Wiener-It\^o chaos expansion and the hyper-contractive properties of the Ornstein-Uhlenbeck semigroup (see Nelson \cite{Nelson}), which are absent in the $\mathcal{L}^1(W,\mu)$ space.\\
The next proposition is a first step towards the main result. Its proof is deferred to Section 3.

\begin{proposition}\label{properties quadratic exp}
Let Assumptions \ref{square integrable density}, \ref{positive trace class} and \ref{integrable quadratic exponential} be in force and set $g_2:=f_2-\frac{f_1^{\otimes 2}}{2}$, where $f_1$ and $f_2$ are the first and second order kernels in the chaos decomposition of $f$, respectively. Then,
\begin{eqnarray*}
\xi=\sum_{k\geq 0}\frac{\delta^{2k}(g_2^{\otimes k})}{k!}
\end{eqnarray*}
is the square integrable density of a Gaussian measure on $W$ with characteristic functional given by
\begin{eqnarray*}
\exp\Big\{-\langle g_2,h^{\otimes 2}\rangle-\frac{\Vert h\Vert_H^2}{2}\Big\}.
\end{eqnarray*}
Moreover, if for any $n\in\mathbb{N}$ the random variables $\mathcal{X}_1,...,\mathcal{X}_n$ are independent with law $\xi d\mu$, then
\begin{eqnarray}\label{WID}
\frac{\mathcal{X}_1+\cdot\cdot\cdot+\mathcal{X}_n}{\sqrt{n}}\quad\mbox{ has law }\quad\xi d\mu.
\end{eqnarray}
\end{proposition}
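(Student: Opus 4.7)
The plan is to establish four claims in sequence: (a) the series defining $\xi$ converges in $\mathcal{L}^2(W,\mu)$; (b) its characteristic functional equals the claimed expression; (c) $\xi$ is non-negative and coincides with the Radon--Nikodym density of a Gaussian probability measure $\nu$ on $W$; (d) $\nu$ is stable under the scaling appearing in (\ref{WID}). For (a), I would combine the orthogonality of Wiener chaoses of different order with the fact that symmetrization is a norm contraction in $H^{\otimes 2k}$ to obtain $\Vert\delta^{2k}(g_2^{\otimes k})\Vert_2^2\leq (2k)!\,|g_2^{\otimes k}|^2_{H^{\otimes 2k}}=(2k)!\,|g_2|^{2k}_{H^{\otimes 2}}$, whence
\[
\Vert\xi\Vert_2^2\leq\sum_{k\geq 0}\binom{2k}{k}|g_2|^{2k}_{H^{\otimes 2}}=\bigl(1-4|g_2|^{2}_{H^{\otimes 2}}\bigr)^{-1/2},
\]
which is finite precisely by Assumption \ref{integrable quadratic exponential}.

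For (b), I would fix $\varphi\in W^*$ and invoke the Wick-exponential identity
\[
e^{i\langle w,\varphi\rangle}=e^{-|\varphi|^2_H/2}\sum_{n\geq 0}\frac{i^n}{n!}\,\delta^n(\varphi^{\otimes n}),
\]
convergent in $\mathcal{L}^2(\mu)$. Chaos orthogonality keeps only the $n=2k$ term when integrating against $\delta^{2k}(g_2^{\otimes k})$, and since $\varphi^{\otimes 2k}$ is fully symmetric one obtains $e^{-|\varphi|_H^2/2}(-1)^k\langle g_2,\varphi^{\otimes 2}\rangle^k_{H^{\otimes 2}}$. Summing over $k$ (the interchange being justified by step (a) and Cauchy--Schwarz) yields $\exp\{-\langle g_2,\varphi^{\otimes 2}\rangle_{H^{\otimes 2}}-|\varphi|^2_H/2\}$. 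As a bonus, taking $\varphi=0$ gives $\int_W\xi\,d\mu=1$.

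For (c), the functional $\Phi(\varphi):=\exp\{-\langle g_2,\varphi^{\otimes 2}\rangle-|\varphi|^2_H/2\}$ corresponds to the prospective covariance $I+2g_2$ on $H$; this operator is bounded below by the identity thanks to (\ref{positive}) and differs from it by a non-negative trace class operator thanks to (\ref{trace class}). Standard results on Gaussian measures on abstract Wiener spaces (or an explicit construction based on diagonalizing $g_2=\sum_j\lambda_j\,e_j\otimes e_j$ with $\lambda_j\geq 0$ and $\sum_j\lambda_j<\infty$) then furnish a Gaussian probability measure $\nu$ on $W$ with characteristic functional $\Phi$ and equivalent to $\mu$. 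Since the finite measures $\xi\,d\mu$ and $\nu$ share the same Fourier transform on $W^*$, they must coincide; hence $\xi\geq 0$ $\mu$-almost surely and $\xi=d\nu/d\mu$. Claim (d) follows at once by independence: the characteristic functional of $\mathcal{X}_1+\cdots+\mathcal{X}_n$ is $\Phi^n$, and replacing $\varphi$ by $\varphi/\sqrt{n}$ recovers $\Phi(\varphi)$ since both terms in the exponent of $\Phi$ are homogeneous of degree two in $\varphi$.

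I expect step (c) to be the main obstacle: showing that $\Phi$ is genuinely the characteristic functional of a Borel probability measure on the Banach space $W$, and not merely of a cylindrical Gaussian on $H$. The trace class hypothesis (\ref{trace class}) is indispensable here; without it the signed measure $\xi\,d\mu$ could fail to be a positive one, even though $\xi$ still belongs to $\mathcal{L}^2(W,\mu)$.
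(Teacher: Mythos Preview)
Your argument is correct; the differences from the paper lie in steps~(c) and~(d).

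For~(c), the paper does not first construct the target Gaussian measure and then match Fourier transforms. Instead it applies a positivity criterion of Nualart--Zakai: a function $\xi\in\mathcal L^2(W,\mu)$ is non-negative if and only if $h\mapsto\int_W e^{i\langle w,h\rangle}\xi(w)\,d\mu(w)$ is positive definite on~$H$. The paper then observes that your $\Phi$ factors as the product of $\exp\{-|h|_H^2/2\}$ (the characteristic functional of~$\mu$) and $\exp\{-\langle g_2,h^{\otimes 2}\rangle\}$ (that of a Gaussian on~$H$ with covariance~$2g_2$, which exists precisely because $g_2$ is positive trace class), and a product of positive-definite functions is positive definite. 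Your route is equally valid but trades the Nualart--Zakai citation for two other ingredients: the existence of a Gaussian on~$W$ with covariance $I+2g_2$ (which, as you note, follows from the trace-class hypothesis by convolving $\mu$ with the $H$-supported Gaussian of covariance~$2g_2$) and the uniqueness of Fourier transforms for finite signed Borel measures on~$W$. Both approaches rely on Assumption~\ref{positive trace class} in the same essential way, and both identify $\xi\,d\mu$ as the convolution of $\mu$ with a Gaussian on~$H$.

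For~(d), the paper gives an algebraic proof: it invokes Theorem~\ref{wick product convolution} to write the density of $\tfrac{1}{\sqrt n}(\mathcal X_1+\cdots+\mathcal X_n)$ as the $n$-fold Wick power $(\Gamma(1/\sqrt n)\xi)^{\diamond n}$ and then expands this via a multinomial identity to recover~$\xi$. Your characteristic-functional argument (independence plus degree-two homogeneity of the exponent) is shorter and more transparent; the paper's version has the advantage of exhibiting the Wick calculus that drives the main theorem, but for this proposition your approach is cleaner.
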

We will refer to the Gaussian measure $\xi d\mu$ on $W$ as the \emph{Gaussian measure induced by} $f$. We are now ready to state the main result of the present paper. The proof is postponed to Section 4.

\begin{theorem}\label{main theorem}
Let $\{X_n\}_{n\geq 1}$ be a sequence of independent and identically distributed random variables taking values on $W$ and fulfilling Assumptions \ref{square integrable density}, \ref{positive trace class} and \ref{integrable quadratic exponential}. Let also $\mathcal{X}$ be a random variable on $W$ whose law is the Gaussian measure induced by $f$. Then, for any $\alpha\in ]0,1[$ the density of
\begin{eqnarray}\label{limit}
\sqrt{\alpha}\cdot\frac{X_1+\cdot\cdot\cdot+X_n-nE[X_1]}{\sqrt{n}}+\sqrt{1-\alpha}\cdot Z
\end{eqnarray}
converges in $\mathcal{L}^1(W,\mu)$ as $n$ tends to infinity to the density of
\begin{eqnarray}\label{limit2}
\sqrt{\alpha}\cdot\mathcal{X}+\sqrt{1-\alpha}\cdot Z
\end{eqnarray}
with speed of convergence of order $\frac{1}{\sqrt{n}}$. Here $Z$ is a $W$-valued random variable with law $\mu$ which is independent of the sequence $\{X_n\}_{n\geq 1}$ and of $\mathcal{X}$.
\end{theorem}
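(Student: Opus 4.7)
The plan is to rewrite both densities in (\ref{limit}) and (\ref{limit2}) as $n$-fold Wick products of Ornstein--Uhlenbeck contractions of two fixed kernels, then to compare them through a telescoping identity. Since $E[X_1]=f_1\in H$ (this follows by integrating a generic $\langle\cdot,\varphi\rangle$ against the chaos expansion of $f$), the Cameron--Martin theorem yields a density $\bar f\in\mathcal{L}^2(W,\mu)$ for $X_1-E[X_1]$. Iterating Theorem \ref{wick product convolution}, the density of $\frac{X_1+\cdots+X_n-nE[X_1]}{\sqrt n}$ equals $(T_{1/\sqrt n}\bar f)^{\diamond n}$, where $T_t$ denotes the Ornstein--Uhlenbeck semigroup. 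Adding the independent Gaussian perturbation $\sqrt{1-\alpha}\,Z$ amounts, by Mehler's formula, to the action of $T_{\sqrt\alpha}$; since $T_t$ multiplies the $k$-th chaos by $t^k$ and Wick product is additive in chaos order, $T_t$ is a homomorphism for $\diamond$. Thus the density of (\ref{limit}) equals $(T_{\sqrt{\alpha/n}}\bar f)^{\diamond n}$; the same reasoning combined with the stability property (\ref{WID}) from Proposition \ref{properties quadratic exp} shows that the density of (\ref{limit2}) equals $(T_{\sqrt{\alpha/n}}\xi)^{\diamond n}$.

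The crucial algebraic observation is that $\bar f$ and $\xi$ agree in their chaos components of orders zero, one and two: both have constant term $1$, vanishing first kernel (by centering for $\bar f$, by inspection for $\xi$), and second kernel $g_2=f_2-\tfrac12 f_1^{\otimes 2}$ (which follows for $\bar f$ from the variance identity (\ref{variance chaos expansion}) applied to $\langle X_1-E[X_1],h\rangle$ and for $\xi$ directly from Proposition \ref{properties quadratic exp}). Hence $\bar f-\xi$ starts at the third chaos, and since $T_t$ acts as $t^k$ on the $k$-th chaos,
\begin{equation*}
\bigl\Vert T_{\sqrt{\alpha/n}}(\bar f-\xi)\bigr\Vert_2 \le \left(\frac{\alpha}{n}\right)^{3/2}\Vert \bar f-\xi\Vert_2 .
\end{equation*}

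Setting $A:=T_{\sqrt{\alpha/n}}\bar f$ and $B:=T_{\sqrt{\alpha/n}}\xi$, the telescoping identity
\begin{equation*}
A^{\diamond n}-B^{\diamond n}=\sum_{k=0}^{n-1} A^{\diamond k}\diamond (A-B)\diamond B^{\diamond(n-1-k)}
\end{equation*}
together with the Wick--Young inequality of Theorem \ref{Young inequality} reduces the $\mathcal{L}^1(W,\mu)$ bound to controlling each summand by a constant multiple of $\Vert A-B\Vert_2$ times $\mathcal{L}^p$-norms of Wick powers of $A$ and $B$. Nelson's hyper-contractive inequality applied to the strictly contractive $T_{\sqrt{\alpha/n}}$ keeps those norms uniformly bounded in $n$ and $k$, so that combining with the $\mathcal{O}(n^{-3/2})$ bound above the $n$ telescoping summands aggregate to $n\cdot\mathcal{O}(n^{-3/2})=\mathcal{O}(n^{-1/2})$, as claimed.

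The main obstacle lies in calibrating, uniformly in $n$ and $k$, the triple of exponents appearing in Theorem \ref{Young inequality} so that the outcome is an $\mathcal{L}^1$ estimate while the $\mathcal{L}^p$-norms of the Wick powers $A^{\diamond k}$ and $B^{\diamond(n-1-k)}$ remain finite. This is precisely where Assumption \ref{integrable quadratic exponential} enters (via $\xi\in\mathcal{L}^2$ from Proposition \ref{properties quadratic exp}) together with the strict sub-unit parameter of $T_{\sqrt{\alpha/n}}$; the latter is the benefit extracted from the auxiliary smoothing by $\sqrt{1-\alpha}\,Z$, which also explains why the theorem requires $\alpha<1$.
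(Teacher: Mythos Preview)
Your skeleton matches the paper's proof: represent both densities as $n$-fold Wick powers via Theorem \ref{wick product convolution} and (\ref{WID}), telescope, and exploit that the centred density and $\xi$ share chaos components of orders $0,1,2$ so that the difference term carries an extra factor $n^{-3/2}$. The gaps are precisely in the two steps you flag as delicate.

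First, the claim that $\bar f\in\mathcal{L}^2(W,\mu)$ does not follow from the Cameron--Martin theorem: that theorem only yields $\bar f\in\mathcal{L}^1$. Writing $\bar f=f\diamond\mathcal{E}(-f_1)$, the Wick product of two $\mathcal{L}^2$ elements need not lie in $\mathcal{L}^2$, so the inequality $\Vert T_{\sqrt{\alpha/n}}(\bar f-\xi)\Vert_2\le(\alpha/n)^{3/2}\Vert\bar f-\xi\Vert_2$ is unjustified as stated. The paper circumvents this by choosing $n_0$ large enough that $\Gamma(\sqrt{\beta/n_0})\bar f\in\mathcal{L}^2$ (this is where hypercontractivity and $f\in\mathcal{L}^2$ are actually used) and running the chaos estimate on that smoothed object.

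Second, and more critically, the plan to bound each telescoping summand by $\Vert A-B\Vert_2$ times $\mathcal{L}^p$-norms of the Wick powers $A^{\diamond k}$, $B^{\diamond(n-1-k)}$, kept uniformly bounded by hypercontractivity, does not close. Any such bound via Theorem \ref{Young inequality} produces factors of the form $\Vert\Gamma(\sqrt{k\alpha/n})\bar f\Vert_p^{\,k}$; hypercontractivity controls each factor by $\Vert\bar f\Vert_2$, a constant generically strictly larger than $1$, so raising it to the power $k$ with $k$ ranging up to $n-1$ gives exponential blow-up, not a uniform bound. The paper's resolution is different: it applies the purely $\mathcal{L}^1$ form of the Young inequality (all $p_i=1$), but with an \emph{asymmetric} weight splitting, assigning weight $1-\alpha$ to the single difference factor and $\alpha/(n-1)$ to each of the $n-1$ bulk factors. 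The bulk factors then contribute $\Vert\bar f\Vert_1^{\,j}\Vert\xi\Vert_1^{\,n-1-j}=1$ exactly, while the difference factor retains the scaling $\Gamma(\sqrt{\alpha/((1-\alpha)n)})$ and hence, after passing to $\mathcal{L}^2$, the $(n^{-3/2})$ decay. This unequal distribution of the smoothing---reserving a fixed, $n$-independent share $1-\alpha$ for the difference---is the missing ingredient in your calibration.
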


We note that if the covariance structure of the variables $X_n$'s coincides with the one of the Wiener measure $\mu$, i.e. $Var(\langle X,h\rangle)=|h|_H^2$, then Assumptions \ref{positive trace class} and \ref{integrable quadratic exponential} are trivially satisfied. This corresponds to the classical assumption in multidimensional central/local limit theorems. Moreover, Theorem \ref{main theorem} generalizes the result proved in \cite{LS 2016}, where it is assumed that $f_1=0$ and $f_2=0$ (from a probabilistic point of view, this is equivalent to the requirements $E[X_n]=0$ and $Var(\langle X,h\rangle)=|h|_H^2$).\\
The reason why the local limit theorem we prove concerns the sequence in (\ref{limit}) instead of $\frac{X_1+\cdot\cdot\cdot+X_n-nE[X_1]}{\sqrt{n}}$, which would be the natural object of investigation, is of technical nature. In fact, observe first of all that adding the term containing $Z$ has the effect of smoothing the density of the sequence $X_n$ through the Ornstein-Uhlenbeck semigroup. Moreover, according to Theorem \ref{wick product convolution} below the Wick product behaves as a Gaussian convolution, i.e. is the necessary tool to handle densities of sum of independent random variables on Gaussian spaces. The Wick product is however an unbounded bilinear form on any $\mathcal{L}^p(W,\mu)$ space; to get something in $\mathcal{L}^p(W,\mu)$ out of a Wick product, one has to smooth the densities involved through the Ornstein-Uhlenbeck semigroup. This is contained in Theorem \ref{Young inequality} below. The point here is that the regularization required by that theorem exhausts all the infinitesimal scaling effect of the factor $\frac{1}{\sqrt{n}}$ in $\frac{X_1+\cdot\cdot\cdot+X_n-nE[X_1]}{\sqrt{n}}$. Hence, we are forced to add an extra smoothing component on the density of the $X_n$'s in order to prove the theorem. We remark however that such a smoothness assumption is also required by Lynnik \cite{Lynnik} in proving an information-theoretic central limit theorem.\\

The paper is organized as follows: in Section 2 the crucial assumptions of the main theorem, Theorem \ref{main theorem}, are verified and discussed on three important examples arising from the applications: the finite dimensional case in relation to dimension independent Berry-Esseen-type bounds, the class of measures of convolution type obtained by convolving the Wiener measure with a probability measure supported on the Cameron-Martin space, the analysis of weak solutions of a family of systems of stochastic differential equations; Section 3 contains the main preparatory theorems needed for the proof of the main result: the roles of the Wick product and Ornstein-Uhlenbeck semigroup in the manipulation of probability densities on infinite dimensional Gaussian spaces, Theorem \ref{wick product convolution} and Theorem \ref{Young inequality}, and the proof of Proposition \ref{properties quadratic exp}, which describes the properties of the limiting object of our local limit theorem; in Section 4 we prove Theorem \ref{main theorem} while in the Appendix we collect for the reader's convenience several useful formulas utilized throughout the paper.

\section{Examples and applications}

In this section we check and discuss the assumptions of Theorem \ref{main theorem} in some concrete examples arising from the applications.

\subsection{The finite dimensional case: dimension independent Berry-Esseen-type bounds}

We choose $W=H=\mathbb{R}^d$ and
\begin{eqnarray*}
\mu_d(A)=\int_{A}(2\pi)^{-\frac{d}{2}}e^{-\frac{|x|^2}{2}}dx,\quad A\in\mathcal{B}(\mathbb{R}^d)
\end{eqnarray*}
where $|\cdot|$ denotes the $d$-dimensional Euclidean norm and $\mathcal{B}(\mathbb{R}^d)$ is the collection of the Borel sets of $\mathbb{R}^d$. Then, $(W,H,\mu)$ is a finite dimensional abstract Wiener space. \\
Let $\{X_n\}_{n\geq 1}$ be a sequence of independent and identically distributed $d$-dimensional random vectors; assume that the law of $X_n$ on $\mathbb{R}^d$ is absolutely continuous with respect to $\mu_d$ with a density $f\in\mathcal{L}^2(\mathbb{R}^d,\mu_d)$ (this corresponds to Assumption \ref{square integrable density}). Moreover, observe that Assumptions \ref{positive trace class} and \ref{integrable quadratic exponential} can be easily rephrased in terms of the covariance matrix of the random vector $X_n$ (condition (\ref{trace class}) is always satisfied in the finite dimensional framework). If we denote by $\mu_n$ the law of
\begin{eqnarray*}
\sqrt{\alpha}\cdot\frac{X_1+\cdot\cdot\cdot+X_n-nE[X_1]}{\sqrt{n}}+\sqrt{1-\alpha}\cdot Z
\end{eqnarray*}
and by $\mu_{\mathcal{X}}$ the law of $\sqrt{\alpha}\cdot\mathcal{X}+\sqrt{1-\alpha}\cdot Z$, then Theorem \ref{main theorem} can rewritten as
\begin{eqnarray}\label{berry-esseen}
d_{TV}(\mu_n,\mu_{\mathcal{X}})\leq \frac{C}{\sqrt{n}}
\end{eqnarray}
where $d_{TV}$ stands for the distance in total variation and $C$ is a constant depending only on $f$ and $\alpha$. Inequality (\ref{berry-esseen}) represents a dimension independent Berry-Essen type bound. These type of estimates have been investigated by many authors under very mild conditions (existence of the third moment) and with a constant $C$ in (\ref{berry-esseen}) depending on the dimension $d$. See Bentkus \cite{Bentkus} for the best known value of $C$ and the references quoted there. We stress that in the aforementioned paper it is assumed that the covariance of the vector $X_n$ is the identity matrix; if we make the same assumption, then Assumptions \ref{positive trace class} and \ref{integrable quadratic exponential} are trivially satisfied. Therefore, compared to the paper \cite{Bentkus}, the additional regularity we impose to the $X_n$'s is Assumption \ref{square integrable density} together with the introduction of the smoothing parameter $\alpha\in ]0,1[$. To conclude, our approach needs more stringent assumptions on the law of the random vector $X_n$ but provides a bound which does not depend on the dimension of the image space of the random sequence $\{X_n\}_{n\geq 1}$ (see \cite{LS 2016} for a more details on this type of comparison).

\subsection{The cases $f_2-\frac{f_1^{\otimes 2}}{2}=0$ and $f_2-\frac{f_1^{\otimes 2}}{2}=g^{\otimes 2}$}

To ease the notation set $g_2:=f_2-\frac{f_1^{\otimes 2}}{2}$ and assume first that $g_2=0$. This means that according to (\ref{variance chaos expansion}) we have
\begin{eqnarray*}
Var(\langle X_n,h\rangle)=|h|^2_H\quad h\in H.
\end{eqnarray*}
Hence, Assumptions \ref{positive trace class} and \ref{integrable quadratic exponential} are trivially fulfilled. Moreover, the density $\xi$ in Proposition \ref{properties quadratic exp} reduces to the constant function one and we recover a slight generalization of the result proved in \cite{LS 2016} (where it is assumed that $f_1=0$ and $f_2=0$).\\

\noindent Now assume that $g_2=g^{\otimes 2}$ for some $g\in H$. Then, $g_2$ is a positive trace class element of $H^{\hat{\otimes} 2}$, which is equivalent to Assumption \ref{positive trace class}. Furthermore, if $|g|^2_H<\frac{1}{2}$, then also Assumption \ref{integrable quadratic exponential} is satisfied. In this case the density $\xi$ from Proposition \ref{properties quadratic exp} looks like
\begin{eqnarray*}
\sum_{k\geq 0}\frac{\delta^{2k}(g^{\otimes 2k})}{k!}.
\end{eqnarray*}
The last expression can be written in the formalism of the Wick Calculus (e.g. Holden et al. \cite{HOUZ}) as
\begin{eqnarray*}
\exp^{\diamond}\{\delta(g)^{\diamond 2}\}\quad\mbox{ or alternatively }\quad :\exp\{\delta(g)^2\}:\quad.
\end{eqnarray*}
It is proved in Aase et al. \cite{AOU} that
\begin{eqnarray}\label{oksendal}
\exp^{\diamond}\{\delta(g)^{\diamond 2}\}=\frac{1}{\sqrt{1+2 |g|_H^2}}\exp\Big\{\frac{\delta(g)^2}{1+2 |g|_H^2}\Big\},
\end{eqnarray}
provided that $2|g|_H^2<1$. If we set $\tilde{g}=\frac{g}{|g|_H}$, then equation (\ref{oksendal}) becomes
\begin{eqnarray*}
\exp^{\diamond}\{\delta(g)^{\diamond 2}\}&=&\frac{1}{\sqrt{1+2 |g|_H^2}}\exp\Big\{\frac{\delta(g)^2}{1+2 |g|_H^2}\Big\}\\
&=&\frac{1}{\sqrt{1+2 |g|_H^2}}\exp\Big\{\frac{2 |g|_H^2}{1+2 |g|_H^2}\frac{\delta(\tilde{g})^2}{2}\Big\}\\
&=&\frac{1}{\sqrt{1+2 |g|_H^2}}\exp\Big\{\Big(1-\frac{1}{1+2 |g|_H^2}\Big)\frac{\delta(\tilde{g})^2}{2}\Big\}\\
&=&\frac{\frac{1}{\sqrt{2\pi(1+2 |g|_H^2)}}e^{-\frac{x^2}{2(1+2 |g|_H^2)}}}{\frac{1}{\sqrt{2\pi}}e^{-\frac{x^2}{2}}}\Big\vert_{x=\delta(\tilde{g})}.
\end{eqnarray*}
Hence, the density $\xi$ can be written more explicitly as
\begin{eqnarray*}
\sum_{k\geq 0}\frac{\delta^{2k}(g^{\otimes 2k})}{k!}=\frac{\frac{1}{\sqrt{2\pi(1+2 |g|_H^2)}}e^{-\frac{x^2}{2(1+2 |g|_H^2)}}}{\frac{1}{\sqrt{2\pi}}e^{-\frac{x^2}{2}}}\Big\vert_{x=\delta(\tilde{\tilde{g}})}.
\end{eqnarray*}

\subsection{Convolution measures}

Let $(W,H,\mu)$ be an abstract Wiener space and let $X$ be a random variable taking values on $W$. Assume that $X=Z+Y$ where $Z$ and $Y$ are independent, the law of $Z$ is $\mu$ and the law of $Y$, say $\nu$, is supported on $H$. Then, the law of $X$ is given by $\mu\star\nu$ where $\star$ denotes the convolution of probability measures. This class of probability measures has an important role in the applications being a Gaussian (white noise) perturbation of a probability measure on the Hilbert space $H$. Poincar\'e-type inequalities with respect to this class of measures have been investigated in \cite{L 2016} and \cite{DLS 2016}.\\
Observe that the measure $\mu\star\nu$ is absolutely continuous with respect to $\mu$ with a density given by
\begin{eqnarray}\label{density convolution}
\frac{d (\mu\star\nu)}{d \mu}=\int_H\mathcal{E}(h)d\nu(h)
\end{eqnarray}
(here $\mathcal{E}(h)$ denotes the stochastic exponential: see (\ref{def stochastic exponential}) in the Appendix below). We now want to check that the measure $\mu\star\nu$ fulfills the assumptions of Theorem \ref{main theorem} . First of all, we need to verify the membership of (\ref{density convolution}) to $\mathcal{L}^2(W,\mu)$. According to the Minkowsky integral inequality we can write
\begin{eqnarray*}
\Big\Vert\int_H\mathcal{E}(h)d\nu(h)\Big\Vert_2&\leq&\int_H\Vert\mathcal{E}(h)\Vert_2d\nu(h)\\
&=&\int_H\exp\Big\{\frac{|h|_H^2}{2}\Big\}d\nu(h).
\end{eqnarray*}
Therefore, the membership of (\ref{density convolution}) to $\mathcal{L}^2(W,\mu)$ is guaranteed if $\nu$ satisfies the following exponential integrability condition
\begin{eqnarray}\label{exponential integrability}
\int_H\exp\Big\{\frac{|h|_H^2}{2}\Big\}d\nu(h)<+\infty.
\end{eqnarray}
We now compute the variance of $\langle X,h\rangle$ for $h\in H$. We have:
\begin{eqnarray*}
Var(\langle X,h\rangle)&=&Var(\langle Z+Y,h\rangle)\\
&=&Var(\langle Z,h\rangle+\langle Y,h\rangle)\\
&=&Var(\langle Z,h\rangle)+Var(\langle Y,h\rangle)\\
&=&|h|_H^2+Var(\langle Y,h\rangle).
\end{eqnarray*}
This yields immediately (\ref{positive}). In addition,
\begin{eqnarray*}
\sum_{j\geq 1}[Var(\langle X,e_j\rangle)-1]=\sum_{j\geq 1}Var(\langle Y,e_j\rangle).
\end{eqnarray*}
Since the measure $\nu$, the law of $Y$, is supported on the Hilbert space $H$ and  satisfies the condition (\ref{exponential integrability}), it follows from the previous equality that also (\ref{trace class}) from Assumption \ref{positive} is satisfied. We now verify the last assumption; by construction (the independence of $Z$ and $Y$) we can write
\begin{eqnarray*}
&&\sum_{i,j\geq 1}[Cov(\langle X,e_i\rangle,\langle X,e_j\rangle)-\delta_{ij}]^2\\
&=&\sum_{i,j\geq 1}[Cov(\langle Z,e_i\rangle+\langle Y,e_i\rangle,\langle Z,e_j\rangle+\langle Y,e_j\rangle)-\delta_{ij}]^2\\
&=&\sum_{i,j\geq 1}[Cov(\langle Z,e_i\rangle,\langle Z,e_j\rangle)+Cov(\langle Y,e_i\rangle,\langle Y,e_j\rangle)-\delta_{ij}]^2\\
&=&\sum_{i,j\geq 1}Cov(\langle Y,e_i\rangle,\langle Y,e_j\rangle)^2\\
&\leq&\Big(\sum_{i\geq 1}Var(\langle Y,e_i\rangle)\Big)^2.
\end{eqnarray*}
Hence, Assumption \ref{integrable quadratic exponential} is fulfilled if, for instance,
\begin{eqnarray}\label{last convolution measure}
\sum_{i\geq 1}Var(\langle Y,e_i\rangle)<1.
\end{eqnarray}

\subsection{Weak solutions of a class of stochastic differential equations}

Consider the system of stochastic differential equations
\begin{eqnarray}
\left\{ \begin{array}{ll}\label{SDE}
dX_t=b_1(Y_t)dt+dB_t^1, & X_0=x \\
dY_t=b_2(X_t)dt+dB_t^2, & Y_0=y
\end{array}\right.
\end{eqnarray}
where $\{(B_t^1,B_t^2)\}_{t\in [0,1]}$ is a two-dimensional standard Brownian motion defined on the probability space $(\Omega,\mathcal{F},\mathcal{P})$, $x,y\in\mathbb{R}$ and $b_1,b_2:\mathbb{R}\to\mathbb{R}$ are measurable functions. If for $i\in\{1,2\}$ the Novikov condition
\begin{eqnarray}\label{Novikov}
E\Big[\exp\Big\{\frac{1}{2}\int_0^1|b_i(B_t^i)|^2dt\Big\}\Big]<+\infty
\end{eqnarray}
is satisfied, then by means of the Girsanov theorem one can assert that the process $\{\tilde{B}_t\}_{t\in [0,1]}$, where $\tilde{B}_t=(\tilde{B}_t^1,\tilde{B}_t^2)$ and for $i\neq j\in\{1,2\}$
\begin{eqnarray*}
\tilde{B}_t^i=B_t^i-\int_0^tb_i(B_s^j)ds
\end{eqnarray*}
is a standard two-dimensional Brownian motion under the probability measure $d\mathcal{Q}:=\mathcal{E}d\mathcal{P}$ with
\begin{eqnarray*}
\mathcal{E}:=\exp\Big\{\int_0^1b_1(B^2_t)dB_t^1+\int_0^1b_2(B^1_t)dB_t^2-\frac{1}{2}\int_0^1|b_1(B^2_t)|^2+|b_2(B^1_t)|^2 dt\Big\}.
\end{eqnarray*}
As a consequence, the process $\{B_t\}_{t\in [0,1]}$ becomes a weak solution of the system (\ref{SDE}) with respect to the probability space $(\Omega,\mathcal{F},\mathcal{Q})$ and the noise
$\{\tilde{B}_t\}_{t\in [0,1]}$. \\
Suppose that we want to investigate the law of the translated Brownian motion $\{\tilde{B}_t\}_{t\in [0,1]}$ under the original measure $\mathcal{P}$. We consider its first component, i.e.
\begin{eqnarray}\label{def of X}
\tilde{B}_t^1=B_t^1-\int_0^tb_1(B_s^2)ds,\quad t\in [0,1],
\end{eqnarray}
and we observe that the independence of $B^1$ and $B^2$ implies that the law of the process $\{\tilde{B}^1_t\}_{t\in [0,1]}$ under the measure $\mathcal{P}$ is obtained by convolving the Wiener measure (which is the law of the process $\{B_t^1\}_{t\in [0,1]}$) with a probability measure supported on the Cameron-Martin space (which is the law of the process $\{-\int_0^tb_1(B_s^2)ds\}_{t\in [0,1]}$). This means that we can proceed the investigation via the general framework of convolution measures described in the previous subsection. \\
Let $W$ be the classical Wiener space $C_0([0,1];\mathbb{R})$ of continuous functions $w$ defined on the interval $[0,1]$ with values on $\mathbb{R}$ and such that $w(0)=0$; $H$ the Cameron-Martin space $H_0^1([0,1];\mathbb{R})$ of absolutely continuous functions with square integrable derivative; $\mu$ the classical Wiener measure on $W$.\\
Denote by $X$ the process $t\mapsto\tilde{B}_t^1$ and by $Z$ and $Y$ the processes $t\mapsto B_t^1$ and $t\mapsto -\int_0^tb_1(B_s^2)ds$, respectively. According to (\ref{def of X}), we have $X=Z+Y$ with $Z$ independent of $Y$; the law of $Z$ is $\mu$ while the law of $Y$, say $\nu$, is supported on $H$. We can write explicitly the density of the law of $X$ with respect to $\mu$ via formula (\ref{density convolution}):
\begin{eqnarray}\label{5}
\int_H\mathcal{E}(h)d\nu(h)&=&\int_H\exp\Big\{\int_0^1\dot{h}_tdw_t-\frac{1}{2}\int_0^1\dot{h}_t^2dt\Big\}d\nu(h)\nonumber\\
&=&\int_W\exp\Big\{-\int_0^1b_1(\tilde{w}_t)dw_t-\frac{1}{2}\int_0^1b_1(\tilde{w}_t)^2dt\Big\}d\mu(\tilde{w}).
\end{eqnarray}
In the second equality we utilized the fact that the measure $\nu$ is the image of (an independent copy of) $\mu$ through the map $t\mapsto -\int_0^tb_1(\tilde{w}_s)ds$. According to the previous subsection, the density in (\ref{5}) belongs to $\mathcal{L}^2(W,\mu)$ if condition (\ref{exponential integrability}) is fulfilled; in the present framework this is equivalent to
\begin{eqnarray*}
\int_H\exp\Big\{\frac{1}{2}\int_0^1b_1(\tilde{w}_t)^2dt\Big\}d\mu(\tilde{w})<+\infty,
\end{eqnarray*}
which is exactly the Novikov condition (\ref{Novikov}). Therefore, Assumption \ref{square integrable density} is equivalent to the Novikov condition (\ref{Novikov}). Furthermore, Assumption \ref{positive trace class} is satisfied according to the discussion of the previous subsection. Let us now focus on Assumption \ref{integrable quadratic exponential}. We know that inequality (\ref{last convolution measure}) is sufficient for that assumption to be true. Let $\{e_i\}_{i\geq 1}$ be an orthonormal bases of $H$; then
\begin{eqnarray*}
\sum_{i\geq 1}Var(\langle Y,e_i\rangle)&=&\sum_{i\geq 1}E[\langle Y,e_i\rangle^2]-(E[\langle Y,e_i\rangle])^2\\
&\leq&\sum_{i\geq 1}E[\langle Y,e_i\rangle^2]\\
&=&\sum_{i\geq 1}\int_W\Big(\int_0^1b_1(w_t)\dot{e}_i(t)dt\Big)^2d\mu(w)\\
&=&\int_W\sum_{i\geq 1}\Big(\int_0^1b_1(w_t)\dot{e}_i(t)dt\Big)^2d\mu(w)\\
&=&\int_W\int_0^1b^2_1(w_t)dtd\mu(w).
\end{eqnarray*}
To conclude, if
\begin{eqnarray*}
\int_0^1E[|b_1(B_t^2)|^2]dt<1
\end{eqnarray*}
then Assumption \ref{integrable quadratic exponential} is satisfied.

\section{Preliminary results}

We are now going to collect several important results of independent interest that will play a crucial role in the proof of Theorem \ref{main theorem}. The next proposition establishes the existence in the Cameron-Martin space $H$ of the mean $E[X]$ of a random element $X$ on $W$.

\begin{proposition}\label{existence of mean}
Let $X$ be a random variable taking values on $W$. Assume that the law of $X$ is absolutely continuous with respect to $\mu$ with a density $f$ belonging to $\mathcal{L}^2(W,\mu)$. Then, the expectation $E[X]$ of $X$ belongs to $H$ and coincides with the first kernel in the chaos decomposition of $f$. Moreover, the density of $X-E[X]$ is given by $f\diamond\mathcal{E}(-E[X])$.
\end{proposition}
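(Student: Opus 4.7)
The plan is to split the proof into two steps: first identify $E[X]$ with the first chaos kernel $f_1\in H$, and then show that subtracting it transforms the density as claimed.

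For the first step, the Bochner integral $E[X]\in W$ is well defined because, by Cauchy-Schwartz and Fernique's theorem,
\begin{eqnarray*}
E[|X|_W]=\int_W |w|_W f(w)\,d\mu(w)\leq\|f\|_2\,\Big(\int_W|w|_W^2\,d\mu(w)\Big)^{1/2}<+\infty.
\end{eqnarray*}
I would then pair $E[X]$ with an arbitrary $\varphi\in W^*$: since $\langle w,\varphi\rangle=\delta(\varphi)(w)$ lies in the first Wiener-It\^o chaos, the orthogonality of distinct chaoses in $f=\sum_{k\geq 0}\delta^k(f_k)$ gives
\begin{eqnarray*}
\langle E[X],\varphi\rangle=\int_W\delta(\varphi)(w)\,f(w)\,d\mu(w)=E[\delta(\varphi)\delta(f_1)]=\langle f_1,\varphi\rangle_H.
\end{eqnarray*}
Because $f_1\in H$ satisfies $\langle f_1,\varphi\rangle=\langle f_1,\varphi\rangle_H$ for every $\varphi\in W^*$, and $W^*$ separates points of $W$, I conclude $E[X]=f_1\in H$.

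For the second step, set $h:=E[X]\in H$. A standard Cameron-Martin change of variable gives, for any bounded Borel $\phi:W\to\mathbb{R}$,
\begin{eqnarray*}
E[\phi(X-h)]=\int_W\phi(w-h)f(w)\,d\mu(w)=\int_W\phi(v)\,f(v+h)\,\mathcal{E}(-h)(v)\,d\mu(v),
\end{eqnarray*}
so $X-h$ has density $f(\cdot+h)\,\mathcal{E}(-h)$ with respect to $\mu$. It then remains to prove the translation-Wick identity
\begin{eqnarray}\label{prop-trans-wick}
F(\cdot+h)\,\mathcal{E}(-h)=F\diamond\mathcal{E}(-h).
\end{eqnarray}
I would verify (\ref{prop-trans-wick}) first on stochastic exponentials $F=\mathcal{E}(k)$, $k\in H$: both sides equal $\mathcal{E}(k-h)$, the left-hand side by combining the shift rule $\mathcal{E}(k)(\cdot+h)=e^{\langle k,h\rangle_H}\mathcal{E}(k)$ with the product rule $\mathcal{E}(k)\,\mathcal{E}(-h)=e^{-\langle k,h\rangle_H}\mathcal{E}(k-h)$, and the right-hand side by the multiplicativity $\mathcal{E}(k)\diamond\mathcal{E}(-h)=\mathcal{E}(k-h)$ of Wick exponentials. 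Linearity and the density of $\mathrm{span}\{\mathcal{E}(k):k\in H\}$ in $\mathcal{L}^2(W,\mu)$ then extend the identity to $F=f$, and specialization to $h=E[X]=f_1$ finishes the proof.

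The main technical obstacle is precisely this last extension, since the Wick product is not jointly continuous on $\mathcal{L}^2(W,\mu)$, so one cannot simply take $\mathcal{L}^2$-limits on both sides of (\ref{prop-trans-wick}). To bypass this I would compare the two sides at the level of chaos kernels: the Taylor expansion of $f(\cdot+h)$ in the Cameron-Martin direction $h$ yields a chaos series whose kernels are iterated contractions of the $f_k$ with powers of $h$, and after multiplication by $\mathcal{E}(-h)$ this rearranges term by term into exactly the chaos series produced by the Wick rule $\delta^j(f_j)\diamond\delta^\ell(h^{\otimes\ell})=\delta^{j+\ell}(f_j\hat\otimes h^{\otimes\ell})$ applied to $f\diamond\mathcal{E}(-h)$. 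The required absolute convergence is controlled by $f\in\mathcal{L}^2(W,\mu)$ together with the exponential moments $\|\mathcal{E}(-h)\|_p=\exp(\frac{p-1}{2}|h|_H^2)$, so that (\ref{prop-trans-wick}) holds as an equality in $\mathcal{L}^1(W,\mu)$; since the left-hand side is already known to be a probability density, this is exactly the conclusion needed.
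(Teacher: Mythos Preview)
Your argument is correct. The identification $E[X]=f_1$ is handled the same way as in the paper (chaos orthogonality against $\delta(\varphi)$), with the welcome addition of a Bochner-integrability estimate via Fernique's theorem; the paper merely appeals to the finiteness of $E[\langle X,\varphi\rangle]$ and invokes the Pettis-type definition $E[\langle X,\varphi\rangle]=\langle E[X],\varphi\rangle$.

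The second half, however, follows a genuinely different route. You first identify the density of $X-E[X]$ by a Cameron--Martin change of variable as $f(\cdot+h)\,\mathcal{E}(-h)$, and then prove the translation--Wick identity $F(\cdot+h)\,\mathcal{E}(-h)=F\diamond\mathcal{E}(-h)$, checking it on stochastic exponentials and extending by a chaos-kernel comparison. This is valid, but you rightly note that the extension step is the delicate part, since the Wick product is not $\mathcal{L}^2$-continuous. The paper avoids this issue entirely: it never writes down the density of $X-E[X]$ independently, but instead computes the characteristic functional of the measure $(f\diamond\mathcal{E}(-E[X]))\,d\mu$ directly, using only the algebraic $S$-transform identity
\[
\int_W(f\diamond g)\,\mathcal{E}(i\varphi)\,d\mu=\int_W f\,\mathcal{E}(i\varphi)\,d\mu\cdot\int_W g\,\mathcal{E}(i\varphi)\,d\mu,
\]
and recognises the result as $E[e^{i\langle X-E[X],\varphi\rangle}]$. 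Thus the paper's approach is shorter and bypasses the translation--Wick identity and its attendant convergence issues, while yours yields the stronger pointwise statement $f\diamond\mathcal{E}(-h)=f(\cdot+h)\,\mathcal{E}(-h)$ as a by-product. If you want to streamline, simply compute the $S$-transform (or characteristic functional) of $f\diamond\mathcal{E}(-E[X])$ instead of proving the full translation identity.
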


\begin{proof}
The expectation of $X$ is defined to be the unique element $E[X]\in W$ such that
\begin{eqnarray}\label{def expectation}
E[\langle X,\varphi\rangle]=\langle E[X],\varphi\rangle,\quad\mbox{ for all }\quad\varphi\in W^*.
\end{eqnarray}
Its existence is guaranteed by the finiteness all the moments of $\langle X,\varphi\rangle$, as explained in the first section. Moreover,
\begin{eqnarray*}
E[\langle X,\varphi\rangle]&=&\int_W\langle w,\varphi\rangle f(w)d\mu(w)\\
&=&\langle f_1,\varphi\rangle_H
\end{eqnarray*}
where we utilized equation (\ref{orthogonality chaos expansion}) below and the identity $\langle w,\varphi\rangle=\delta^1(\varphi)$. Comparing with equation (\ref{def expectation}), this shows that $E[X]=f_1\in H$. In addition, using identity (\ref{S-transform}) we can write
\begin{eqnarray*}
\int_We^{i\langle\cdot,\varphi\rangle}(f\diamond\mathcal{E}(-E[X]))d\mu&=&e^{-\frac{|\varphi|_H^2}{2}}
\int_W\mathcal{E}(i\varphi)(f\diamond\mathcal{E}(-E[X]))d\mu\\
&=&e^{-\frac{|\varphi|_H^2}{2}}\int_W\mathcal{E}(i\varphi)fd\mu
\int_W\mathcal{E}(i\varphi)\mathcal{E}(-E[X])d\mu\\
&=&e^{-\frac{|\varphi|_H^2}{2}-i\langle E[X],\varphi\rangle_H}\int_W\mathcal{E}(i\varphi)fd\mu\\
&=&e^{-i\langle E[X],\varphi\rangle_H}\int_We^{i\langle\cdot,\varphi\rangle}fd\mu\\
&=&e^{-i\langle E[X],\varphi\rangle_H}E\Big[e^{i\langle X,\varphi\rangle}\Big]\\
&=&E\Big[e^{i\langle X-E[X],\varphi\rangle}\Big].
\end{eqnarray*}
Here we employed the fact that $\langle E[X],\varphi\rangle_H=\langle E[X],\varphi\rangle$ since $E[X]\in H$ and $\varphi\in W^*$. The proof is complete.
\end{proof}

\noindent Since we are dealing with random elements taking values on possibly infinite dimensional abstract Wiener spaces, it is not clear whether we can find an operator acting on densities which replicates the role of the classic convolution product on Euclidean spaces. The next theorem tells that the Wick product fulfills precisely this requirement. Similar results for the Poisson and chi-square distributions can be found in \cite{LS 2013} and \cite{LSportelli 2012}. In the sequel $\Gamma(\lambda)$ for $\lambda\in [0,1]$ denotes the operator defined in (\ref{def Gamma}); it corresponds to the Ornstein-Uhlenbeck semigroup $T_t$ via the relation $\Gamma(e^{-t})=T_t$ for any $t\geq 0$.

\begin{theorem}\label{wick product convolution}
Let $X_1,...,X_n$ be independent random variables taking values on $W$ and denote by $\mu_{X_1},...,\mu_{X_n}$ the corresponding laws on $W$, respectively. Assume that the measures $\mu_{X_1},...,\mu_{X_n}$
are absolutely continuous with respect to $\mu$. Then, for any $\alpha_1,...\alpha_n\in [0,1]$ such that $\alpha_1 + \cdot\cdot\cdot+\alpha_n =1,$ we have
\begin{eqnarray}\label{gaussian wick}
\Gamma(\sqrt{\alpha_1})\frac{d\mu_{X_1}}{d\mu}\diamond\cdot\cdot\cdot\diamond\Gamma(\sqrt{\alpha_n})\frac{d\mu_{X_n}}{d\mu} & = & \frac{d\mu_{\sqrt{\alpha_1} X_1+\cdot\cdot\cdot+\sqrt{\alpha_n} X_n}}{d\mu},
\end{eqnarray}
where $\frac{dQ}{d\mu}$ denotes the Radon-Nikodym derivative of the
measure $Q$ with respect to the reference measure $\mu$ and $\mu_{\sqrt{\alpha_1} X_1+\cdot\cdot\cdot+\sqrt{\alpha_n} X_n}$ denotes the law of the random variable $\sqrt{\alpha_1} X_1+\cdot\cdot\cdot+\sqrt{\alpha_n} X_n$.
\end{theorem}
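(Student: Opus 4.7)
The plan is to reduce (\ref{gaussian wick}) to an identity between characteristic functionals: since both sides are non-negative and integrable with respect to $\mu$, it suffices to verify that for every $\varphi\in W^*$ the two sides have the same integral against $e^{i\langle\cdot,\varphi\rangle}$, and then invoke the fact that the characteristic functional determines a finite Borel measure on $W$. Writing $u_k:=d\mu_{X_k}/d\mu$, the characteristic functional of the right-hand side is obtained immediately by independence as
$$\int_W e^{i\langle w,\varphi\rangle}\,\frac{d\mu_{\sqrt{\alpha_1}X_1+\cdots+\sqrt{\alpha_n}X_n}}{d\mu}(w)\,d\mu(w) \;=\; \prod_{k=1}^n E\!\left[e^{i\sqrt{\alpha_k}\langle X_k,\varphi\rangle}\right].$$

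To treat the left-hand side, I would first convert the Fourier integral into an integral against the stochastic exponential $\mathcal{E}(i\varphi)$ via the $S$-transform identity (\ref{S-transform}), and then use two structural properties that follow chaos-by-chaos from the orthogonality relation (\ref{orthogonality chaos expansion}) and from $\Gamma(\lambda)\delta^k(u_k)=\lambda^k\delta^k(u_k)$: first, the Wick multiplicativity
$$\int_W \mathcal{E}(\psi)(f\diamond g)\,d\mu \;=\; \int_W\mathcal{E}(\psi)f\,d\mu\cdot\int_W\mathcal{E}(\psi)g\,d\mu,$$
already used implicitly in the proof of Proposition \ref{existence of mean}, and second, the scaling rule $\int_W\mathcal{E}(\psi)\,\Gamma(\lambda)u\,d\mu = \int_W\mathcal{E}(\lambda\psi)\,u\,d\mu$. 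Iterating the first identity over the Wick product of the $\Gamma(\sqrt{\alpha_k})u_k$ and then applying the second to each factor yields
$$\int_W\mathcal{E}(i\varphi)\bigl[\Gamma(\sqrt{\alpha_1})u_1\diamond\cdots\diamond\Gamma(\sqrt{\alpha_n})u_n\bigr]\,d\mu \;=\; \prod_{k=1}^n\int_W\mathcal{E}(i\sqrt{\alpha_k}\varphi)\,u_k\,d\mu.$$
Multiplying both sides by $e^{-|\varphi|_H^2/2}$ and applying (\ref{S-transform}) in reverse to each factor on the right produces the prefactor $\prod_k e^{\alpha_k|\varphi|_H^2/2} = e^{|\varphi|_H^2/2}$, which cancels $e^{-|\varphi|_H^2/2}$ exactly because $\alpha_1+\cdots+\alpha_n=1$. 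What remains is precisely $\prod_k E[e^{i\sqrt{\alpha_k}\langle X_k,\varphi\rangle}]$, matching the right-hand side.

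The principal technical obstacle is to make sense of the iterated Wick product when the $u_k$ are only guaranteed to lie in $\mathcal{L}^1(W,\mu)$, since $\diamond$ is not continuous on any $\mathcal{L}^p$ space in the standard sense. The remedy is that the regularizing factors $\Gamma(\sqrt{\alpha_k})$ with $\alpha_k<1$ are hypercontractive, so that each $\Gamma(\sqrt{\alpha_k})u_k$ acquires $\mathcal{L}^{p_k}$ regularity with $p_k>1$; the constraint $\alpha_1+\cdots+\alpha_n=1$ can then be used to choose the exponents $p_k$ compatibly so that Theorem \ref{Young inequality} turns the iterated Wick product into a bona fide element of $\mathcal{L}^1(W,\mu)$ and legitimizes the $S$-transform manipulations above. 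The degenerate cases in which some $\alpha_k$ equals $0$ or $1$ reduce either to a trivial identity (since $\Gamma(0)$ kills all non-constant chaoses, producing the neutral element $1$ for the Wick product) or to a lower-order instance of the theorem, and are dispatched separately.
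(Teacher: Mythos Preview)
The paper does not actually prove this theorem: its ``proof'' is a one-line citation to Proposition~3.1 of \cite{LS 2016}. Your $S$-transform/characteristic-functional computation is correct and is the natural (and, as far as one can tell, the same) route: factor the $S$-transform of the Wick product, push $\Gamma(\sqrt{\alpha_k})$ across onto $\mathcal{E}(i\varphi)$ via self-adjointness to get $\mathcal{E}(i\sqrt{\alpha_k}\varphi)$, and use $\sum_k\alpha_k=1$ to make the Gaussian prefactors cancel. That part needs no change.

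There is, however, a slip in your handling of the well-definedness of the left-hand side. You write that ``the regularizing factors $\Gamma(\sqrt{\alpha_k})$ with $\alpha_k<1$ are hypercontractive, so that each $\Gamma(\sqrt{\alpha_k})u_k$ acquires $\mathcal{L}^{p_k}$ regularity with $p_k>1$.'' This is not true: Nelson's inequality gives $\Vert\Gamma(\lambda)f\Vert_q\leq\Vert f\Vert_p$ only for $q-1\leq (p-1)/\lambda^2$, so starting from $p=1$ you never reach any $q>1$. The Ornstein--Uhlenbeck semigroup does not improve $\mathcal{L}^1$ integrability. The correct justification is the $\mathcal{L}^1$ endpoint of the Young inequality itself: Theorem~\ref{Young inequality} (or rather its $\mathcal{L}^1$ precursor in \cite{LS 2010}, as the paper points out in Section~4) applies directly with $p_1=\cdots=p_n=r=1$ and yields
\[
\Big\Vert\,\Gamma(\sqrt{\alpha_1})u_1\diamond\cdots\diamond\Gamma(\sqrt{\alpha_n})u_n\,\Big\Vert_1\;\leq\;\Vert u_1\Vert_1\cdots\Vert u_n\Vert_1\;=\;1,
\]
so the left-hand side of (\ref{gaussian wick}) is a bona fide element of $\mathcal{L}^1(W,\mu)$ without any detour through higher $\mathcal{L}^p$ spaces. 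Once that is in place, your $S$-transform identities are legitimate (e.g.\ by approximating each $u_k$ in $\mathcal{L}^1$ by cylindrical functions, for which everything is finite-chaos and the formal manipulations are immediate, and passing to the limit using the $\mathcal{L}^1$ bound just stated). Drop the hypercontractivity sentence and invoke the $\mathcal{L}^1$ Young inequality directly; the rest of your argument stands.
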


\begin{proof}
See Proposition 3.1 in \cite{LS 2016}.
\end{proof}

\noindent According to the previous result, the Wick product can be considered to be a Gaussian analogue of the classic convolution product. From this point of view, the following theorem corresponds to a sharp Young-type inequality in the Gaussian framework.

\begin{theorem}\label{Young inequality}
Let $\alpha_1,...,\alpha_n\in [0,1]$ be such that $\alpha_1+\cdot\cdot\cdot+\alpha_n = 1$ and let
$p_1,...,p_n,r \in [1$, $+\infty]$ satisfy the following condition
\begin{eqnarray*}
\frac{\alpha_1}{p_1-1}+\cdot\cdot\cdot+\frac{\alpha_n}{p_n-1}=\frac{1}{r-1}.
\end{eqnarray*}
If $f_i\in\mathcal{L}^{p_i}(W,\mu)$ for each $i=1,...,n$, then $\Gamma(\sqrt{\alpha_1})f_1 \diamond
\cdot\cdot\cdot\diamond\Gamma(\sqrt{\alpha_n})f_n\in\mathcal{L}^r(W,\mu)$. More precisely,
\begin{eqnarray}\label{gaussian young}
\parallel\Gamma(\sqrt{\alpha_1})f\diamond\cdot\cdot\cdot\diamond\Gamma(\sqrt{\alpha_n})f_n\parallel_r & \leq &
\parallel f_1\parallel_{p_1}\cdot\cdot\cdot\parallel f_n\parallel_{p_n}.
\end{eqnarray}
\end{theorem}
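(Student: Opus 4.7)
The plan is to reduce the $n$-factor inequality to the bilinear case by induction and then to establish the $n=2$ case by combining Nelson's hypercontractivity with the convolution-type description of the Wick product given by Theorem \ref{wick product convolution}. Associativity of $\diamond$ and the semigroup identity $\Gamma(c_1)\Gamma(c_2)=\Gamma(c_1 c_2)$ allow one to regroup $\Gamma(\sqrt{\alpha_1})f_1\diamond\cdots\diamond\Gamma(\sqrt{\alpha_n})f_n$ as a two-factor product $\Gamma(\sqrt{\beta})G\diamond\Gamma(\sqrt{\alpha_n})f_n$ with $\beta=\alpha_1+\cdots+\alpha_{n-1}$ and $G=\Gamma(\sqrt{\alpha_1/\beta})f_1\diamond\cdots\diamond\Gamma(\sqrt{\alpha_{n-1}/\beta})f_{n-1}$. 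Choosing an auxiliary exponent $s$ via $\sum_{i<n}\frac{\alpha_i/\beta}{p_i-1}=\frac{1}{s-1}$, the induction hypothesis (applied to the probability vector $(\alpha_i/\beta)_{i<n}$) yields $G\in\mathcal{L}^s(W,\mu)$ with $\|G\|_s\le\prod_{i<n}\|f_i\|_{p_i}$; the algebraic identity $\frac{\beta}{s-1}+\frac{\alpha_n}{p_n-1}=\frac{1}{r-1}$ then reduces the problem to the two-factor case.

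For $n=2$, set $c_i=\sqrt{\alpha_i}$ and $q_i=1+(p_i-1)/\alpha_i$, so that Nelson's sharp hypercontractivity gives $\|\Gamma(c_i)f_i\|_{q_i}\le\|f_i\|_{p_i}$. It is then enough to establish the pure Wick--Young bound $\|h_1\diamond h_2\|_r\le\|h_1\|_{q_1}\|h_2\|_{q_2}$ under the equivalent relation $\frac{1}{q_1-1}+\frac{1}{q_2-1}=\frac{1}{r-1}$. This last inequality I would derive from the integral representation
\[
(\Gamma(c_1)f_1\diamond\Gamma(c_2)f_2)(z)=\int_W f_1(c_1 z+c_2 w')\,f_2(c_2 z-c_1 w')\,d\mu(w'),
\]
obtained from Theorem \ref{wick product convolution} by bilinear extension from densities, together with the fact that $(Z_1,Z_2)=(c_1 Z+c_2 Z_-,\,c_2 Z-c_1 Z_-)$ is an orthogonal pair of $\mu$-distributed variables with $Z_-$ independent of $Z$. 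Duality then translates the $\mathcal{L}^r$ bound into the trilinear estimate $|E[h(Z)f_1(Z_1)f_2(Z_2)]|\le\|h\|_{r'}\|f_1\|_{p_1}\|f_2\|_{p_2}$, a Brascamp--Lieb-type Gaussian inequality whose sharp form should follow from a further application of hypercontractivity paired with an appropriately tuned H\"older split on $(W\times W,\mu\otimes\mu)$.

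The main obstacle is precisely this trilinear estimate: although the relation $\sum_i \alpha_i/(p_i-1)=1/(r-1)$ is designed exactly so that the exponents in the hypercontractive and H\"older steps close up, the computation must carefully handle the joint Gaussian dependence of $h(Z)$, $f_1(Z_1)$, $f_2(Z_2)$ through the linear constraint $Z=c_1 Z_1+c_2 Z_2$. An economical route is to interpolate, on the logarithms of the exponents and in the spirit of Beckner's proof of the sharp Gaussian inequality, between the trivial $\mathcal{L}^1\to\mathcal{L}^1$ bound $\|u\diamond v\|_1\le\|u\|_1\|v\|_1$ valid on densities (a direct consequence of Theorem \ref{wick product convolution}) and Nelson's $\mathcal{L}^p\to\mathcal{L}^q$ contraction for $\Gamma(c)$; this convex interpolation reproduces the required condition on $(p_1,p_2,r,\alpha_1,\alpha_2)$ and closes the argument with constant one.
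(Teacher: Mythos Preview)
The paper does not give a proof of this theorem at all; its ``proof'' is a one-line reference to Theorem~4.7 in \cite{DLS 2011}. So there is no in-paper argument to compare your sketch against, only the cited source.

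Your reduction from $n$ factors to two via associativity of $\diamond$, the semigroup law for $\Gamma$, and the functorial identity (\ref{functor}) is correct and is exactly how one sets up the induction. The substantive gap is in the $n=2$ step. The ``pure Wick--Young bound''
\[
\|h_1\diamond h_2\|_r\le\|h_1\|_{q_1}\|h_2\|_{q_2}\qquad\text{under}\qquad \tfrac{1}{q_1-1}+\tfrac{1}{q_2-1}=\tfrac{1}{r-1}
\]
is \emph{false}. Take $h_1=h_2=\delta(e)$ for a unit vector $e\in H$, $q_1=q_2=3$, $r=2$. Then $h_1\diamond h_2=\delta^2(e^{\otimes 2})$ has $\|h_1\diamond h_2\|_2=\sqrt{2}$, whereas $\|h_1\|_3^2=(E|N|^3)^{2/3}=(4/\sqrt{2\pi})^{2/3}$, and $8/\pi<2\sqrt{2}$ shows this is strictly smaller than $\sqrt{2}$. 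The Wick product is unbounded on $\mathcal{L}^p$ spaces precisely because the smoothing by $\Gamma(\sqrt{\alpha_i})$ cannot be detached: one cannot first pass from $\|f_i\|_{p_i}$ to $\|\Gamma(c_i)f_i\|_{q_i}$ via Nelson and then bound the bare product. Your own integral representation already signals this, since it expresses $\Gamma(c_1)f_1\diamond\Gamma(c_2)f_2$ through $f_1,f_2$, not through $h_1,h_2$; it therefore cannot deliver the pure bound you claim to extract from it.

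The viable route --- and the one taken in \cite{DLS 2011} --- is the one your last paragraph almost reaches: skip the $h_i$ entirely, dualize the integral representation to the trilinear Gaussian Brascamp--Lieb estimate
\[
\big|E[g(Z)\,f_1(c_1 Z+c_2 Z_-)\,f_2(c_2 Z-c_1 Z_-)]\big|\le\|g\|_{r'}\|f_1\|_{p_1}\|f_2\|_{p_2},
\]
and prove \emph{that} using hypercontractivity applied conditionally (the Mehler kernel relating $Z$ to each $Z_i$) combined with H\"older on the product space. Nelson's inequality enters inside this trilinear estimate, not as a preliminary step, and this is exactly why the exponents close up under the stated relation. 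Your interpolation idea between the $\mathcal{L}^1$ endpoint and the hypercontractive endpoint is in the right spirit, but as written it is not a proof: you would need to make the interpolation family and the analyticity explicit, and in any case the detour through the pure bound must be abandoned.
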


\begin{proof}
See Theorem 4.7 in \cite{DLS 2011}.
\end{proof}

\subsection{Proof of Proposition \ref{properties quadratic exp}}
We first prove that $\xi\in\mathcal{L}^2(W,\mu)$:
\begin{eqnarray*}
\Vert\xi\Vert^2_2&=&\Big\Vert\sum_{k\geq 0}\frac{\delta^{2k}(g_2^{\otimes k})}{k!}\Big\Vert_2^2\\
&=&\sum_{k\geq 0}\frac{(2k)!}{(k!)^2}|g_2^{\otimes k}|^2_{H^{\otimes 2k}}\\
&=&\sum_{k\geq 0}{2k \choose k}|g_2|^{2k}_{H^{\otimes 2}}\\
&=&\frac{1}{\sqrt{1-4|g_2|^{2}_{H^{\otimes 2}}}}
\end{eqnarray*}
provided that
\begin{eqnarray*}
|g_2|^{2}_{H^{\otimes 2}}<\frac{1}{4}.
\end{eqnarray*}
Here we used the generating function of the central binomial coefficients, i.e
\begin{eqnarray*}
\sum_{k\geq 0}{2k \choose k}x^k=\frac{1}{\sqrt{1-4x}},\quad\mbox{ for }x<\frac{1}{4}.
\end{eqnarray*}
On the other hand, as explained in the introduction, Assumption \ref{integrable quadratic exponential} serves precisely to guarantee that $|g_2|^{2}_{H^{\otimes 2}}<\frac{1}{4}$, entailing the square integrability of $\xi$.\\
We now prove that $\xi$ is non negative by means of a characterization of positivity proposed by Nualart and Zakai in \cite{Nualart Zakai}. According to that paper, we need to prove that the function
\begin{eqnarray}\label{def F}
h\in H\mapsto F(h)=\int_We^{i\langle w,h\rangle}\xi(w)d\mu(w)
\end{eqnarray}
is positive definite, i.e. for all $z_1,...,z_n\in\mathbb{C}$ and $h_1,...,h_n\in H$ one has
\begin{eqnarray*}
\sum_{j,l=1}^nz_jF(h_i-h_l)\bar{z}_l\geq 0.
\end{eqnarray*}
It is not difficult to see that
\begin{eqnarray*}
F(h)&=&\sum_{k\geq 0}\frac{(-1)^{k}}{k!}\langle g_2^{\otimes k},h^{\otimes 2k}\rangle_{H^{\otimes 2k}}\cdot e^{-\frac{|h|_H^2}{2}}\\
&=&\sum_{k\geq 0}\frac{(-1)^{k}}{k!}\langle g_2,h^{\otimes 2}\rangle_{H^{\otimes 2}}^k\cdot e^{-\frac{|h|_H^2}{2}}\\
&=&\exp\Big\{-\langle g_2,h^{\otimes 2}\rangle_{H^{\otimes 2}}-\frac{|h|_H^2}{2}\Big\}.
\end{eqnarray*}
Since by Assumption \ref{positive trace class} the kernel $g_2=f_2-\frac{f_1^{\otimes 2}}{2}$ is positive, symmetric and of trace class, the function $\exp\{-\langle g_2,h^{\otimes 2}\rangle_{H^{\otimes 2}}\}$ is the characteristic functional of a Gaussian measure on $H$ with covariance operator equal to $2g_2$ and hence is positive definite. Moreover, $\exp\Big\{-\frac{|h|_H^2}{2}\Big\}$ is the characteristic functional of the Wiener measure $\mu$, entailing its positive definiteness. This show that the function in (\ref{def F}) is positive definite being the product of two functions of this type. This in turn also proves that $\xi d\mu$ is a Gaussian measure on $W$ obtained convolving the Wiener measure with a Gaussian measure on $H$ (whose covariance operator is equal to $2g_2$).\\
We are now left with the proof of (\ref{WID}). Note that according to Theorem \ref{wick product convolution}, since the law of the $\mathcal{X}_i$'s has density $\xi$ with respect to $\mu$, then the law of $\frac{\mathcal{X}_1+\cdot\cdot\cdot+\mathcal{X}_n}{\sqrt{n}}$ has density $\Gamma(1/\sqrt{n})\xi\diamond\cdot\cdot\cdot\diamond\Gamma(1/\sqrt{n})\xi$ ($n$-times). Therefore, (\ref{WID}) is equivalent to
\begin{eqnarray*}
\underbrace{\Gamma(1/\sqrt{n})\xi\diamond\cdot\cdot\cdot\diamond\Gamma(1/\sqrt{n})\xi}_{\mbox{$n$-times}}=\xi.
\end{eqnarray*}
By definition of Wick product (see (\ref{def wick product}) below),
\begin{eqnarray*}
\Gamma(1/\sqrt{n})\xi\diamond\cdot\cdot\cdot\diamond\Gamma(1/\sqrt{n})\xi &=&\sum_{k\geq 0}\sum_{i_1+\cdot\cdot\cdot+i_n=k}\frac{n^{-i_1}\cdot\cdot\cdot n^{-i_n}}{i_1!\cdot\cdot\cdot i_n!}\delta^{2k}(g_2^{\otimes i_1}\hat{\otimes}\cdot\cdot\cdot\hat{\otimes} g_2^{\otimes i_n})\\
&=&\sum_{k\geq 0}\frac{n^{-k}}{k!}\delta^{2k}\Big(\sum_{i_1+\cdot\cdot\cdot+i_n=k}\frac{k!}{i_1!\cdot\cdot\cdot i_n!}g_2^{\otimes i_1}\hat{\otimes}\cdot\cdot\cdot\hat{\otimes} g_2^{\otimes i_n}\Big)\\
&=&\sum_{k\geq 0}\frac{n^{-k}}{k!}\delta^{2k}((\underbrace{g_2+\cdot\cdot\cdot+g_2}_{\mbox{$n$-times}})^{\otimes k})\\
&=&\sum_{k\geq 0}\frac{\delta^{2k}(g_2^{\otimes k})}{k!}\\
&=&\xi.
\end{eqnarray*}
The proof is complete

\section{Proof of Theorem \ref{main theorem}}

We are now ready to prove our local limit theorem. The proof will be essentially based on the algebraic and analytical properties of the Wick product and Ornstein-Uhlenbeck semigroup, through the results described in the previous section.\\
Let $f\in\mathcal{L}^2(W,\mu)$ denote the common density of the $X_n$'s with respect to the measure $\mu$. According to Proposition \ref{existence of mean}, the density of $X_n-E[X_n]$ is given by $f\diamond\mathcal{E}(-f_1)$, where $f_1\in H$ denotes the first kernel in the Wiener-It\^o chaos decomposition of $f$. To ease the notation, we set
\begin{eqnarray*}\label{def f tilde}
\tilde{f}:=f\diamond\mathcal{E}(-f_1).
\end{eqnarray*}
We remark that if $f=1+\delta^1(f_1)+\delta^2(f_2)+\cdot\cdot\cdot$, then by definition of Wick product (\ref{def wick product}) and stochastic exponential (\ref{stoch exp. decomposition}) we get
\begin{eqnarray}\label{X-E[X]}
f\diamond\mathcal{E}(-f_1)&=&1+\delta^1(f_1-f_1)+\delta^2\Big(f_2+f_1\otimes (-f_1)+\frac{(-f_1)^{\otimes 2}}{2}\Big)+\cdot\cdot\cdot\nonumber\\
&=&1+\delta^2\Big(f_2-\frac{f_1^{\otimes 2}}{2}\Big)+\cdot\cdot\cdot.
\end{eqnarray}
From Theorem \ref{wick product convolution} we know that the density of $\frac{X_1+\cdot\cdot\cdot+X_n-nE[X_1]}{\sqrt{n}}$ is given by
\begin{eqnarray*}
\Gamma(1/\sqrt{n})\tilde{f}\diamond\cdot\cdot\cdot\diamond\Gamma(1/\sqrt{n})\tilde{f}=(\Gamma(1/\sqrt{n})\tilde{f})^{\diamond n}
\end{eqnarray*}
where $g^{\diamond n}$ stands for $g\diamond\cdot\cdot\cdot\diamond g$ ($n$-times). Moreover, the density of the random variable in (\ref{limit}) can be written as
\begin{eqnarray*}
\Gamma(\sqrt{\alpha})[(\Gamma(1/\sqrt{n})\tilde{f})^{\diamond n}]\diamond\Gamma(\sqrt{1-\alpha})1=[\Gamma(\sqrt{\alpha}/\sqrt{n})\tilde{f}]^{\diamond n}.
\end{eqnarray*}
Here we utilized the functorial property (\ref{functor}) and the identity $\Gamma(\lambda)1=1$ (note that the density of $Z$ with respect to $\mu$ is one). Observe in addition that we can write without ambiguity the right hand side of the previous equation as $\Gamma(\sqrt{\alpha}/\sqrt{n})\tilde{f}^{\diamond n}$ (again as a consequence of the interplay between Ornstein-Uhlenbeck semigroup and Wick product). Analogously, since the density of $\mathcal{X}$ is $\xi$, we get that the density of $\sqrt{\alpha}\mathcal{X}+\sqrt{1-\alpha}Z$ is $\Gamma(\sqrt{\alpha})\xi$.\\
Our aim is to prove that
\begin{eqnarray*}
\lim_{n\to +\infty}\Vert \Gamma(\sqrt{\alpha}/\sqrt{n})\tilde{f}^{\diamond n}-\Gamma(\sqrt{\alpha})\xi\Vert_1=0.
\end{eqnarray*}
First of all, exploiting the associativity and distributivity of the Wick product, together with Proposition \ref{properties quadratic exp}, we can write
\begin{eqnarray*}
\Gamma(\sqrt{\alpha}/\sqrt{n})\tilde{f}^{\diamond
n}-\Gamma(\sqrt{\alpha})\xi&=&\Gamma(\sqrt{\alpha}/\sqrt{n})\tilde{f}^{\diamond
n}-\Gamma(\sqrt{\alpha}/\sqrt{n})\xi^{\diamond n}\\
&=&(\Gamma(\sqrt{\alpha}/\sqrt{n})\tilde{f}-\Gamma(\sqrt{\alpha}/\sqrt{n})\xi)\diamond\\
&&\diamond\sum_{j=0}^{n-1}
\Gamma(\sqrt{\alpha}/\sqrt{n})\tilde{f}^{\diamond
j}\diamond\Gamma(\sqrt{\alpha}/\sqrt{n})\xi^{\diamond n-1-j}.
\end{eqnarray*}
Now, we take the $\mathcal{L}^1(W,\mu)$-norm and we apply Theorem \ref{Young inequality} (actually we need only the $\mathcal{L}^1$-form of the inequality which was proven before in the paper \cite{LS 2010}) and the triangle inequality to obtain
\begin{eqnarray}\label{1}
\Vert\Gamma(\sqrt{\alpha}/\sqrt{n})\tilde{f}^{\diamond
n}-\Gamma(\sqrt{\alpha})\xi\Vert_1&\leq&\Vert\Gamma(\sqrt{\alpha}/\sqrt{(1-\alpha)n})\tilde{f}-\Gamma(\sqrt{\alpha}/\sqrt{(1-\alpha) n})\xi\Vert_1\nonumber\\
&&\cdot\Big\Vert\sum_{j=0}^{n-1}
\Gamma(1/\sqrt{n})\tilde{f}^{\diamond
j}\diamond\Gamma(1/\sqrt{n})\xi^{\diamond n-1-j}\Big\Vert_1\nonumber\\
&\leq&\Vert\Gamma(\sqrt{\alpha}/\sqrt{(1-\alpha)n})\tilde{f}-\Gamma(\sqrt{\alpha}/\sqrt{(1-\alpha) n})\xi\Vert_1\nonumber\\
&&\cdot\sum_{j=0}^{n-1}
\Vert\Gamma(1/\sqrt{n})\tilde{f}^{\diamond
j}\diamond\Gamma(1/\sqrt{n})\xi^{\diamond n-1-j}\Vert_1.
\end{eqnarray}
Let us now focus the attention on the last sum. Invoking once again Theorem \ref{Young inequality} and exploiting the fact that $\tilde{f}$ and $\xi$ are density functions (their $\mathcal{L}^1(W,\mu)$-norms are equal to one) we get
\begin{eqnarray*}
&&\sum_{j=0}^{n-1}\Vert\Gamma(1/\sqrt{n})\tilde{f}^{\diamond
j}\diamond\Gamma(1/\sqrt{n})\xi^{\diamond n-1-j}\Vert_1\\
&\leq&\sum_{j=0}^{n-1}\Vert\Gamma(\sqrt{n-1}/\sqrt{n})\tilde{f}\Vert_1^{j}\Vert\Gamma(\sqrt{n-1}/\sqrt{n})\xi\Vert_1^{n-j-1}\\
&\leq&\sum_{j=0}^{n-1}\Vert \tilde{f}\Vert_1^{j}\Vert\xi\Vert_1^{n-j-1}\\
&=&n.
\end{eqnarray*}
Plugging this last estimate in (\ref{1}) we obtain
\begin{eqnarray}\label{2}
&&\Vert\Gamma(\sqrt{\alpha}/\sqrt{n})\tilde{f}^{\diamond
n}-\Gamma(\sqrt{\alpha})\xi\Vert_1\nonumber\\
&\leq&\Vert\Gamma(\sqrt{\alpha}/\sqrt{(1-\alpha)n})\tilde{f}-\Gamma(\sqrt{\alpha}/\sqrt{(1-\alpha) n})\xi\Vert_1\nonumber\\
&&\cdot\sum_{j=0}^{n-1}
\Vert\Gamma(1/\sqrt{n})\tilde{f}^{\diamond
j}\diamond\Gamma(1/\sqrt{n})\xi^{\diamond n-1-j}\Vert_1\nonumber\\
&\leq&n\Vert\Gamma(\sqrt{\alpha}/\sqrt{(1-\alpha)n})\tilde{f}-\Gamma(\sqrt{\alpha}/\sqrt{(1-\alpha) n})\xi\Vert_1
\end{eqnarray}
To ease the notation we set $\beta=\frac{\alpha}{1-\alpha}$ and we observe that, by the Nelson's hyper-contractive estimate \cite{Nelson} and the assumption $f\in\mathcal{L}^2(W,\mu)$, there exists $n_0\in\mathbb{N}$ big enough such that the function $\Gamma(\sqrt{\beta}/\sqrt{n_0})\tilde{f}$ belongs to $\mathcal{L}^2(W,\mu)$; therefore, we can write for all $n\geq n_0$ that
\begin{eqnarray*}
\Gamma(\sqrt{\beta}/\sqrt{n})\tilde{f}&=&\Gamma(\sqrt{n_0}/\sqrt{n})\Gamma(\sqrt{\beta}/\sqrt{n_0})\tilde{f}\\
&=&\sum_{k\geq 0}\Big(\frac{n_0}{n}\Big)^{\frac{k}{2}}\delta^k(\hat{f}_k)\\
&=&1+\frac{n_0}{n}\delta^2(\hat{f}_2)+\sum_{k\geq 3}\Big(\frac{n_0}{n}\Big)^{\frac{k}{2}}\delta^k(\hat{f}_k)
\end{eqnarray*}
where the $\hat{f}_k$'s are the kernels in the Wiener-It\^o chaos decomposition of $\Gamma(\sqrt{\beta}/\sqrt{n_0})\tilde{f}$.
The same holds true for $\Gamma(\sqrt{\beta}/\sqrt{n})\xi$, i.e.
\begin{eqnarray*}
\Gamma(\sqrt{\beta}/\sqrt{n})\xi&=&\Gamma(\sqrt{n_0}/\sqrt{n})\Gamma(\sqrt{\beta}/\sqrt{n_0})\xi\\
&=&\sum_{k\geq 0}\Big(\frac{n_0}{n}\Big)^{\frac{k}{2}}\delta^k(\hat{g}_k)\\
&=&1+\frac{n_0}{n}\delta^2(\hat{g}_2)+\sum_{k\geq 3}\Big(\frac{n_0}{n}\Big)^{\frac{k}{2}}\delta^k(\hat{g}_k).
\end{eqnarray*}
where the $\hat{g}_k$'s are the kernels of $\Gamma(\sqrt{\beta}/\sqrt{n_0})\xi$. Note that by construction $\hat{f}_2=\hat{g}_2$ (recall (\ref{X-E[X]}) and the definition of $g_2$ in Proposition \ref{properties quadratic exp}) which implies
\begin{eqnarray*}
\Gamma(\sqrt{\beta}/\sqrt{n})\tilde{f}-\Gamma(\sqrt{\beta}/\sqrt{n})\xi=\sum_{k\geq 3}\Big(\frac{n_0}{n}\Big)^{\frac{k}{2}}\delta^k(\hat{f}_k-\hat{g}_k)
\end{eqnarray*}
Hence, for all $n\geq n_0$
\begin{eqnarray*}
&&\Vert\Gamma(\sqrt{\beta}/\sqrt{n})\tilde{f}-\Gamma(\sqrt{\beta}/\sqrt{n})\xi)\Vert_1\\
&\leq&\Vert\Gamma(\sqrt{\beta}/\sqrt{n})\tilde{f}-\Gamma(\sqrt{\beta}/\sqrt{n})\xi)\Vert_2\\
&=&\Vert\Gamma(\sqrt{n_0}/\sqrt{n})\Gamma(\sqrt{\beta}/\sqrt{n_0})\tilde{f}-\Gamma(\sqrt{n_0}/\sqrt{n})\Gamma(\sqrt{\beta}/\sqrt{n})\xi)\Vert_2\\
&=&\Big(\sum_{k\geq 3}k!\Big(\frac{n_0}{n}\Big)^k|\hat{f}_k-\hat{g}_k|^2\Big)^{\frac{1}{2}}\\
&\leq&\Big(\frac{n_0}{n}\Big)^{\frac{3}{2}}\Big(\sum_{k\geq 3}k!|\hat{f}_k-\hat{g}_k|^2\Big)^{\frac{1}{2}}.
\end{eqnarray*}
Combining the last estimate with (\ref{2}) we conclude that
\begin{eqnarray*}
&&\Vert\Gamma(\sqrt{\alpha}/\sqrt{n})\tilde{f}^{\diamond n}-\Gamma(\sqrt{\alpha})\xi\Vert_1\\
&\leq&n\Vert\Gamma(\sqrt{\alpha}/\sqrt{(1-\alpha)n})\tilde{f}-\Gamma(\sqrt{\alpha}/\sqrt{(1-\alpha) n})\xi\Vert_1\\
&\leq&n\Big(\frac{n_0}{n}\Big)^{\frac{3}{2}}\Big(\sum_{k\geq 3}k!|\hat{f}_k-\hat{g}_k|^2\Big)^{\frac{1}{2}}\\
&=&\frac{C}{\sqrt{n}}\Big(\sum_{k\geq 3}k!|\hat{f}_k-\hat{g}_k|^2\Big)^{\frac{1}{2}}.
\end{eqnarray*}
The proof of the convergence is complete.

\section{Appendix}

In this section we recall for the reader's convenience few definitions and notations and collect some useful formulas that we utilized throughout the paper. For more details on the subject we refer the interested reader to one of the books \cite{Bogachev}, \cite{Janson}, \cite{Kuo Banach} and \cite{Nualart}.\\
For $f,g\in\mathcal{L}^2(W,\mu)$ with $f=\sum_{k\geq 0}\delta^k(f_k)$ and $g=\sum_{k\geq 0}\delta^k(g_k)$ one has the identity
\begin{eqnarray}\label{orthogonality chaos expansion}
\int_Wf(w)g(w)d\mu(w)=\sum_{k\geq 0}k!\langle f_k,g_k\rangle_{H^{\otimes k}}.
\end{eqnarray}
The \emph{stochastic exponential} is defined as
\begin{eqnarray}\label{def stochastic exponential}
w\in W\mapsto\mathcal{E}(h)(w):=\exp\Big\{\langle w,h\rangle-\frac{|h|_H^2}{2}\Big\},\quad h\in H.
\end{eqnarray}
Its chaos decomposition is given by
\begin{eqnarray}\label{stoch exp. decomposition}
\mathcal{E}(h)=\sum_{k\geq 0}\delta^k\Big(\frac{h^{\otimes k}}{k!}\Big).
\end{eqnarray}
For any $\lambda\in [0,1[$, we define the operator $\Gamma(\lambda)$ acting
on $\mathcal{L}^2(W,\mu)$ as
\begin{eqnarray}\label{def Gamma}
\Gamma(\lambda)\Big(\sum_{k\geq 0}\delta^k(f_k)\Big):=\sum_{k\geq 0}
\lambda^k\delta^k(f_k).
\end{eqnarray}
We observe that with $\lambda=e^{-\tau}$, $\tau\geq 0$, the operator $\Gamma(\lambda)$ coincides with the
Ornstein-Uhlenbeck semigroup
\begin{eqnarray*}
(P_{\tau}f)(w):=\int_Wf\big(e^{-\tau}w+\sqrt{1-e^{-2\tau}}\tilde{w}\big)d\mu(\tilde{w}),\quad
w\in W, \tau\geq 0,
\end{eqnarray*}
which is a contraction on $\mathcal{L}^p(W,\mu)$ for any $p\geq 1$. On the space $\mathcal{L}^2(W,\mu)$ one can define an unbounded multiplication between functions through the prescription
\begin{eqnarray}\label{def wick product}
\delta^k(f_k)\diamond\delta^j(f_j):=\delta^{k+j}(f_k\hat{\otimes} f_j)
\end{eqnarray}
where $\hat{\otimes}$ denotes the symmetric tensor product. This is named \emph{Wick product} of $\delta^k(f_k)$ and $\delta^j(f_j)$ and it is extended by linearity. It is easy to check that for $\lambda\in [0,1]$ and $f,g\in\mathcal{L}^2(W,\mu)$,
\begin{eqnarray}\label{functor}
\Gamma(\lambda)(f\diamond g)=\Gamma(\lambda)f\diamond\Gamma(\lambda)g
\end{eqnarray}
and for $h,l\in H$,
\begin{eqnarray*}
\mathcal{E}(h)\diamond\mathcal{E}(l)=\mathcal{E}(h+l)
\end{eqnarray*}
and
\begin{eqnarray}\label{S-transform}
\int_W (f\diamond g)\mathcal{E}(h)d\mu=\int_W f\mathcal{E}(h)d\mu\cdot\int_W g\mathcal{E}(h)d\mu.
\end{eqnarray}
For additional information on the Wick product (and its role in the theory of stochastic differential equations)  we refer to the book by Holden et al. \cite{HOUZ}, the paper \cite{DLS 2013} and the references quoted there.


\begin{thebibliography}{99}

\bibitem{AOU}
K. Aase, B. {\O}ksendal and J. Ub{\o}e, Using the Donsker delta function to compute hedging
strategies, \emph{Potential Analysis} \textbf{14} (2001) 351-374.

\bibitem{Barron}
A. R. Barron, Entropy and the central limit theorem, \emph{Annals of  Probability} \textbf{14} (1986) 336-342.

\bibitem{Bentkus}
V. Bentkus, On the dependence of the Berry-Esseen bound on dimension, \emph{J. Statist.
Plann. Inference} \textbf{113} (2003) 385-402.

\bibitem{Blozenis}
M. Bloznelis, A note on the multivariate local limit theorem, \emph{Statistics and Proba-
bility Letters} \textbf{59} (2002) 227-233.

\bibitem{Bogachev}
V. I. Bogachev, \emph{Gaussian Measures}, American Mathematical Society, Providence,
1998.

\bibitem{DLS 2011}
P. Da Pelo, A. Lanconelli and A. I. Stan, A H\"older-Young-Lieb inequality for norms of Gaussian Wick products, \emph{Inf. Dim. Anal. Quantum Prob. Related Topics} \textbf{14} (2011) 375-407.

\bibitem{DLS 2013}
P. Da Pelo, A. Lanconelli and A. I. Stan, An It\^o formula for a family of stochastic
integrals and related Wong-Zakai theorems, \emph{Stochastic Processes and their Appli-
cations} \textbf{123} (2013) 3183-3200.

\bibitem{DLS 2016}
P. Da Pelo, A. Lanconelli and A. I. Stan, An extension of the Beckner's type Poincar\'e inequality to convolution measures on abstract Wiener spaces, \emph{Stochastic Analysis and Applications} \textbf{34} (2016) 47-64.

\bibitem{Davydov}
Y. Davydov, A variant of an infinite-dimensional local limit theorem, \emph{Journal of
Soviet Mathematics [1]} \textbf{61} (1992) 1853-1856.

\bibitem{Gnedenko}
B. V. Gnedenko, Local limit theorem for densities, \emph{Doklady Akad. Nauk SSSR} \textbf{95} (1954) 5-7.

\bibitem{HOUZ}
H. Holden, B. {\O}ksendal, J. Ub{\o}e and T.-S. Zhang, \emph{Stochastic Partial Differential Equations - II Edition}, Springer, New York, 2010.

\bibitem{Janson}
S. Janson, \emph{Gaussian Hilbert spaces}, Cambridge Tracts in
Mathematics \textbf{129}, Cambridge University Press, Cambridge, 1997.

\bibitem{Kuo Banach}
H. H. Kuo, \emph{Gaussian measures in Banach spaces}, Lecture Notes in Mathematics \textbf{463}, Springer, New York, 1975

\bibitem{L 2016}
A. Lanconelli, A new approach to Poincar\'e-type inequalities on the Wiener space, \emph{Stochastic and Dynamics} \textbf{16} (2016) 18 pages.

\bibitem{LSportelli 2012}
A. Lanconelli and L. Sportelli, Wick calculus for the square of a Gaussian random variable with application to Young and hypercontractive inequalities, \emph{Inf. Dim. Anal. Quantum Prob. Related Topics} \textbf{15} (2012) 16 pages.

\bibitem{LS 2010}
A. Lanconelli and A. I. Stan, Some norm inequalities for Gaussian
Wick Products, \emph{Stochastic Analysis and Applications} \textbf{28} (2010) 523-539.

\bibitem{LS 2013}
A. Lanconelli and A. I. Stan, A H\"older inequality for norms of Poissonian Wick products, \emph{Inf. Dim.
Anal. Quantum Prob. Related Topics} \textbf{16} (2013) 39 pages.

\bibitem{LS 2016}
A. Lanconelli and A. I. Stan, A note on a local limit theorem for Wiener space valued random variables, \emph{Bernoulli} \textbf{22} (2016) 2101-2112.

\bibitem{Lynnik}
Yu. V. Linnik, An information-theoretic proof of the central limit theorem with the
Lindberg condition, \emph{Theory of Probabability and Applications} \textbf{4} (1959) 288-299.

\bibitem{Nelson}
E. Nelson, The free Markoff field, \emph{Journal of Functional Analysis} \textbf{12} (1973) 211-227

\bibitem{Nualart}
D. Nualart, \emph{Malliavin calculus and Related Topics - II Edition}, Springer, New York,
2006.

\bibitem{Nualart Zakai}
D. Nualart and M. Zakai, Positive and strongly positive Wiener functionals, \emph{Barcelona Seminar on Stochastic Analysis} \textbf{32}, Birkh\"{a}user, Basel (1993) 132-146.

\bibitem{Prohorov}
Yu. V. Prohorov, On a local limit theorem for densities, \emph{Doklady Akad. Nauk SSSR} \textbf{83} (1952) 797-800.

\bibitem{Ranga Rao Varadarajan}
R. Ranga Rao and V. S. Varadarajan, A limit theorem for densities, \emph{Sankhya} \textbf{22} (1960) 261-266.

\end{thebibliography}
\end{document}